\date{\today}
\newcommand{\Z}{{\mathbb Z}}
\newcommand{\R}{{\mathbb R}}
\newcommand{\C}{{\mathbb C}}
\newcommand{\T}{{\mathbb T}}
\newcommand{\SL}{{\mathrm{SL}}}
\newcommand{\SO}{{\mathrm{SO}}}
\newcommand{\Leb}{{\mathrm{Leb}}}
\newcommand{\Hd}{{\mathrm{H}}}
\newcommand{\LP}{{\mathrm{LP}}}
\newtheorem{theorem}{Theorem}[section]
\newtheorem{lemma}[theorem]{Lemma}
\theoremstyle{definition}
\newtheorem*{remark}{Remark}
\theoremstyle{definition}
\theoremstyle{definition}
\theoremstyle{definition}
\newtheorem{claim}[theorem]{Claim}
\theoremstyle{definition}
\numberwithin{equation}{section}
\newcommand{\tr}{\mathrm{tr} }
\newcommand{\eqdef}{\overset{\mathrm{def}}=}
\begin{document}

\title[Limit-Periodic Operators with Zero Measure Spectrum]{Limit-Periodic Continuum Schr\"odinger Operators with Zero Measure Cantor Spectrum}

\author[D.\ Damanik]{David Damanik}

\address{Department of Mathematics, Rice University, Houston, TX~77005, USA}

\email{damanik@rice.edu}

\thanks{D.\ D.\ was supported in part by NSF grants DMS--1067988 and DMS--1361625.}

\author[J.\ Fillman]{Jake Fillman}

\address{Department of Mathematics, Virginia Tech, 225 Stanger Street -- 0123, Blacksburg, VA~24060, USA (Corresponding Author)}

\email{fillman@vt.edu (Corresponding Author)}

\thanks{J.\ F.\ was supported in part by NSF grants DMS--1067988 and DMS--1361625. (Corresponding Author)}

\author[M.\ Lukic]{Milivoje Lukic}

\address{Department of Mathematics, Rice University, Houston TX 77005, USA and Department of Mathematics, University of Toronto, Bahen Centre, 40 St.\ George St., Toronto, Ontario, CANADA M5S 2E4}

\email{mlukic@math.toronto.edu}

\thanks{M.\ L.\ was supported in part by NSF grant DMS--1301582.}

\thanks{The authors would also like to thank the Isaac Newton Institute for Mathematical Sciences, Cambridge, for support and hospitality during the programme ``Periodic and Ergodic Spectral Problems" where part of this work was undertaken.}

\begin{abstract}
We consider Schr\"odinger operators on the real line with limit-periodic potentials and show that, generically, the spectrum is a Cantor set of zero Lebesgue measure and all spectral measures are purely singular continuous. Moreover, we show that for a dense set of limit-periodic potentials, the spectrum of the associated Schr\"odinger operator has Hausdorff dimension zero. In both results one can introduce a coupling constant $\lambda \in (0,\infty)$, and the respective statement then holds simultaneously for all values of the coupling constant.
\end{abstract}

\maketitle



\textbf{MSC2010 Subject Class:} 34L40

\section{Introduction}

We will study spectral characteristics of self-adjoint operators of the form
\[
H_V \phi
=
-\phi'' + V\phi
\]
in $L^2(\R)$, where $V:\R \to \R$ is a bounded, continuous function, known as the \emph{potential}. Our results concern the class of (uniformly) \emph{limit-periodic} potentials, that is, potentials $V$ which are uniform limits of continuous periodic functions on $\R$. Let $\LP = \LP(\R)$ denote the set of uniformly limit-periodic functions $\R \to \R$. Equipped with the $L^\infty$ norm, this is a complete metric space of functions.  It is well known that the spectrum of $H_V$ has a tendency to be a Cantor set whenever $V$ is limit-periodic; compare; for example, \cite{AS81, FL, MC, M, PT88}. Here we show the following result:

\begin{theorem} \label{t:zmdense:coup}
There is a residual subset $\mathcal C \subseteq \LP$ such that $\sigma(H_{\lambda V})$ is a perfect set of zero Lebesgue measure, and $H_{\lambda V}$ has purely singular continuous spectrum for all $V \in \mathcal C$ and all $\lambda > 0$.
\end{theorem}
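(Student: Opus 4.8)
The plan is to realize $\mathcal C$ as a residual subset of the complete metric space $\LP$ via the Baire category theorem, reducing the theorem to a density statement about periodic operators.

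\emph{Baire setup.} For $n,j,k\in\N$ I would set
\[
\mathcal O_{n,j,k}=\set{V\in\LP:\ \abs{\sigma(H_{\lambda V})\cap[-n,n]}<\tfrac1j\ \text{for all }\lambda\in[\tfrac1k,k]}.
\]
Each $\mathcal O_{n,j,k}$ is open: if $V$ lies in it then for each $\lambda\in[\tfrac1k,k]$ there is a finite union $U$ of open intervals with $\overline U\subset\rho(H_{\lambda V})$ and $\abs{U\cap[-n,n]}>2n-\tfrac1j$, and since adding a bounded potential of sup-norm $\eta$ moves the spectrum by at most $\eta$ in Hausdorff distance, $\overline U$ stays in the resolvent set of $H_{\mu W}$ for $(\mu,W)$ near $(\lambda,V)$; compactness of $[\tfrac1k,k]$ upgrades this to a single ball around $V$ inside $\mathcal O_{n,j,k}$. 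Separately, let $\mathcal G\subset\LP$ be the set of $V$ admitting periodic approximants $V^{(m)}$ of period $p_m\to\infty$ with $\norm{V-V^{(m)}}_{L^\infty([-2p_m,2p_m])}\le p_m^{-m}$; this is a dense $G_\delta$ (every periodic potential lies in it once its period is allowed to be taken large, and it is visibly a countable intersection of open sets). I would then take $\mathcal C=\mathcal G\cap\bigcap_{n,j,k}\mathcal O_{n,j,k}$. Granting density of the $\mathcal O_{n,j,k}$, $\mathcal C$ is residual, and for $V\in\mathcal C$ and any $\lambda>0$: picking $k$ with $\lambda\in[\tfrac1k,k]$ and letting $j\to\infty$ gives $\abs{\sigma(H_{\lambda V})\cap[-n,n]}=0$ for every $n$, so $\abs{\sigma(H_{\lambda V})}=0$; this annihilates the absolutely continuous spectrum and makes $\sigma(H_{\lambda V})$ nowhere dense. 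Since $\lambda V$ again satisfies the approximation condition, a continuum Gordon lemma excludes eigenvalues, leaving purely singular continuous spectrum. Finally $\sigma(H_{\lambda V})$ has no isolated points (a standard fact for ergodic one-dimensional Schr\"odinger operators with a compact minimal hull: an isolated point would be an $L^2$ eigenvalue, and translating the base point to infinity would push the continuous, strictly positive hull function $\omega\mapsto\int_0^1\abs{\psi_\omega}^2\,dx$ down to zero on the compact hull), so $\sigma(H_{\lambda V})$ is a perfect, nowhere dense set of zero measure, i.e.\ a zero-measure Cantor set.

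\emph{Reduction to periodic gap opening.} Because periodic potentials are dense in $\LP$, it suffices to show that arbitrarily close to any periodic $V_0$ there is a periodic $V\in\mathcal O_{n,j,k}$, i.e.\ a periodic $V$ with $\norm{V-V_0}_\infty<\varepsilon$ and $\sup_{\lambda\in[1/k,k]}\abs{\sigma(H_{\lambda V})\cap[-n,n]}<\tfrac1j$. This would follow by iterating, $O(\log(nj))$ times with geometrically shrinking perturbations, the one-step statement: \emph{for any periodic $W$ of period $p$ and any $\eta>0$ there are $q\in\N$ and a $qp$-periodic $u$ with $\norm{u}_\infty<\eta$ such that $\sup_{\lambda\in[1/k,k]}\abs{\sigma(H_{\lambda(W+u)})\cap[-n,n]}\le\tfrac12\sup_{\lambda\in[1/k,k]}\abs{\sigma(H_{\lambda W})\cap[-n,n]}$.} Since each step only multiplies the period, the outcome $V$ is periodic (hence limit-periodic), and the $\varepsilon$-budget is respected. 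For the one-step statement I would use Floquet theory: viewing $W$ as $qp$-periodic, the Floquet discriminant satisfies $\Delta_{qp}(E)=\tr\bigl(M_p(E)^q\bigr)=2T_q\bigl(\Delta_p(E)/2\bigr)$ with $M_p(E)$ the period-$p$ monodromy matrix and $T_q$ the Chebyshev polynomial, so the ``new'' band edges sit where $\Delta_p(E)=2\cos(\pi\ell/q)$, $0<\ell<q$; because $T_q'(\cos(\pi\ell/q))=0$ these are double roots of $\Delta_{qp}(\cdot)\mp2$, i.e.\ closed gaps. A $qp$-periodic perturbation $u$ splits each such double periodic/antiperiodic eigenvalue by an amount governed to leading order by the relevant Fourier coefficient of $u$ (the $2\times2$ block of $u$ in the corresponding two-dimensional eigenspace), everything depending continuously on $\lambda$, hence controllably uniformly on $[\tfrac1k,k]$; spreading the $L^\infty$-budget of $u$ across the $\asymp qp\sqrt n/\pi$ Fourier modes resonant with $[0,n]$ should open a definite fraction of every new gap in $[-n,n]$ while keeping $\norm{u}_\infty<\eta$ once $q$ is large, removing at least half the spectral measure there.

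\emph{Main obstacle.} The crux --- and where essentially all the work lies --- is the quantitative, uniform control in this last step. Near energy $E$ the new band edges are only $\asymp\sqrt E/(qp)$ apart, so erasing a definite fraction of the measure forces one to open the intervening gaps to a comparable width, which is exactly the borderline regime where the naive perturbative expansions for the many mutually interacting gaps stop being small relative to one another. Establishing the gap-opening estimate uniformly over all $\asymp qp\sqrt n$ relevant band edges and uniformly over the compact coupling window is the heart of the matter; I would expect it to require either a carefully structured (non-random) choice of $u$ in Floquet coordinates, or an appeal to Marchenko--Ostrovski-type inverse spectral theory to manufacture directly periodic potentials of controlled $L^\infty$ norm whose spectrum has small measure on $[-n,n]$. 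The remaining ingredients --- the Baire scheme, the continuum Gordon lemma, and the absence of discrete spectrum for ergodic operators --- are soft or standard.
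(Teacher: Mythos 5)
Your Baire-category superstructure is essentially the paper's: your sets $\mathcal O_{n,j,k}$ are the sets $U_{R,\delta,\Lambda}$ of Theorem~\ref{t:urdense:coup} (with $R=n$, $\delta=1/j$, $\Lambda=k$), your openness argument via upper semicontinuity of the spectrum and compactness of the coupling window is the one given there, and the exclusion of eigenvalues by a continuum Gordon lemma on a dense $G_\delta$ of well-approximated potentials is exactly how the paper finishes (perfectness then being immediate, since an isolated point of the spectrum of a self-adjoint operator is an eigenvalue). However, the entire content of the theorem is concentrated in the density of $\mathcal O_{n,j,k}$, i.e.\ in producing, near any periodic $V_0$, a periodic $V$ with $\Leb(\sigma(H_{\lambda V})\cap[-n,n])$ small uniformly for $\lambda\in[1/k,k]$, and this is precisely the step you leave unproven. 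Your proposed mechanism --- view $W$ as $qp$-periodic, note that the new gaps are all closed (correct, via $\Delta_{qp}=2T_q(\Delta_p/2)$), and open a definite fraction of each of the $\asymp qp\sqrt n$ gaps by a perturbation $u$ with $\|u\|_\infty<\eta$ so as to halve the measure per step --- founders exactly where you say it does: to remove a fixed fraction of the measure you must open each gap to a width comparable to the band width $\asymp 1/(qp)$, which is the regime where the first-order $2\times2$ degenerate perturbation theory for one gap is no longer dominant over the cross-terms coming from the $\asymp qp\sqrt n$ other resonant modes, and you give no argument that a choice of $u$ exists doing this simultaneously for all gaps and uniformly over $[1/k,k]$. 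Flagging this as ``the heart of the matter'' is accurate, but it means the proof is not complete.

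The paper's route (Lemma~\ref{l:smallspec}, following Avila \cite{avila2009CMP}) avoids multi-gap perturbation theory entirely. One first uses Simon's genericity of open gaps \cite{simon76} plus vertical shifts $V_j'+i\gamma$ and compactness of $[\Lambda^{-1},\Lambda]$ to produce a finite family $W_1,\dots,W_\ell$ of $T'$-periodic potentials, all $\varepsilon$-close to $V_0$, whose resolvent sets cover $[-R,R]$ for every coupling in the window; by \eqref{per:le} this gives a uniform lower bound $\eta>0$ on $\max_j L(E,\lambda W_j)$. Concatenating long blocks of the $W_j$ then forces, for each $(E,\lambda)$ in the spectrum, exponential growth of the transfer matrix across at least one sub-block of commuting monodromy matrices, where spectral radius controls norm. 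The key analytic input converting this into smallness of the spectrum is Lemma~\ref{l:ids:tm}, a quantitative form of Avila's identity \eqref{eq:ids:UHP} showing $\frac{dk}{dE}\gtrsim \frac1T\int_0^T\|M_E(t)\|^2\,dt$; since each band carries IDS weight exactly $1/\widetilde T$, large conjugacy norms force exponentially short bands, and the band-counting Lemma~\ref{l:bandcount} finishes. This yields measure $\le e^{-\widetilde T^{1/2}}$ in one step (no iteration needed), with the exponential smallness also being what powers Theorem~\ref{t:hd0dense}. If you want to complete your write-up, replacing your Floquet gap-opening step by this Lyapunov-exponent/IDS mechanism is the missing idea.
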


We will first address the question of zero-measure Cantor spectrum. By the Baire Category Theorem, it suffices to prove the following theorem to demonstrate generic persistence of zero-measure spectrum at arbitrary coupling.

\begin{theorem} \label{t:urdense:coup}
For $R > 0$, $\delta > 0$, and $\Lambda > 1$, let $U_{R,\delta,\Lambda}$ denote the set of $V \in \LP$ with $\Leb(\sigma(H_{\lambda V}) \cap [-R,R]) < \delta$ for all $\Lambda^{-1} \leq \lambda \leq \Lambda$. Then, for all $R,\delta > 0$, and $\Lambda > 1$, $U_{R,\delta,\Lambda}$ is a dense, open subset of $\LP$.
\end{theorem}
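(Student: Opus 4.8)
The plan is to prove openness by a semicontinuity argument and density by an iterated gap-opening construction, organized so that the coupling constant is controlled uniformly at every stage.

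\emph{Openness.} First I would observe that $V\mapsto (H_V-i)^{-1}$ is Lipschitz from $L^\infty(\R)$ (with the sup norm) into the bounded operators on $L^2(\R)$, since $(H_W-i)^{-1}-(H_V-i)^{-1}=(H_W-i)^{-1}(V-W)(H_V-i)^{-1}$ has norm at most $\|V-W\|_\infty$. Hence $(\lambda,V)\mapsto H_{\lambda V}$ is continuous from $[\Lambda^{-1},\Lambda]\times\LP$ into the self-adjoint operators with the norm resolvent topology. Norm resolvent convergence makes the spectrum upper semicontinuous: if $H_n\to H$, then for every $\eta>0$ one has $\sigma(H_n)\subseteq\{E:\dist(E,\sigma(H))<\eta\}$ for all large $n$. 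Since $\Leb(\{E\in[-R,R]:\dist(E,\sigma(H))<\eta\})\downarrow\Leb(\sigma(H)\cap[-R,R])$ as $\eta\downarrow0$, the map $(\lambda,V)\mapsto\Leb(\sigma(H_{\lambda V})\cap[-R,R])$ is jointly upper semicontinuous, and a short compactness argument shows that its supremum over $\lambda\in[\Lambda^{-1},\Lambda]$ is an upper semicontinuous function of $V$. Therefore $U_{R,\delta,\Lambda}$, the set where this supremum is $<\delta$, is open.

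\emph{Reduction for density.} Fix $V_0\in\LP$ and $\varepsilon>0$. Since continuous periodic functions are dense in $\LP$, choose periodic $p$ with $\|p-V_0\|_\infty<\varepsilon/2$; it suffices to produce a limit-periodic $\tilde p$ with $\|\tilde p-p\|_\infty<\varepsilon/2$ and $\Leb(\sigma(H_{\lambda\tilde p})\cap[-R,R])=0$ for every $\lambda\in[\Lambda^{-1},\Lambda]$ (zero measure being more than enough). I would build $\tilde p=\lim_k p_k$ with $p_0=p$ and $p_{k+1}=p_k+w_{k+1}$, where $w_{k+1}$ is periodic with period $q_{k+1}T_k$ an integer multiple of the period $T_k$ of $p_k$, $\|w_{k+1}\|_\infty=\eta_{k+1}$ with $\sum_k\eta_k<\varepsilon/2$, chosen so that $\Leb(\sigma(H_{\lambda p_{k+1}})\cap[-R',R'])\le\tfrac12\,\Leb(\sigma(H_{\lambda p_k})\cap[-R',R'])$ for all $\lambda\in[\Lambda^{-1},\Lambda]$, where $R'=R+\Lambda(\|p\|_\infty+\varepsilon)+1$ is a fixed window. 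Granting this, $\sigma(H_{\lambda\tilde p})$ lies within distance $\lambda\sum_{j>k}\eta_j$ of $\sigma(H_{\lambda p_k})$, so
\[
\Leb(\sigma(H_{\lambda\tilde p})\cap[-R,R])\ \le\ \Leb(\sigma(H_{\lambda p_k})\cap[-R',R'])\ +\ 2\Lambda\Big(\sum_{j>k}\eta_j\Big)\big(\#\{\text{bands of }H_{\lambda p_k}\text{ in }[-R',R']\}+1\big).
\]
The first term is at most $2^{-k}\cdot 2R'$, and since the periods grow at least geometrically while the band count in $[-R',R']$ is $O(\sqrt{R'}\,T_k)$ uniformly in $\lambda$, the tails $\sum_{j>k}\eta_j$ can be prescribed to decay fast enough that the second term also tends to $0$. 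Hence the left side is $0$ for each such $\lambda$, and $\tilde p\in U_{R,\delta,\Lambda}$.

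\emph{The gap-opening step, uniformly in $\lambda$.} This is the heart of the matter. The period-$q_{k+1}T_k$ discriminant of $H_{\lambda p_k}$ equals $2\cdot(\text{Chebyshev}_{q_{k+1}})$ evaluated at half the period-$T_k$ discriminant, so it has the same sublevel set $\{|\cdot|\le 2\}$ but oscillates $q_{k+1}$ times across each band of $H_{\lambda p_k}$, giving each band $q_{k+1}-1$ closed gaps. Adding $\lambda w_{k+1}$ opens these: by the classical first-order formula for gap widths in terms of Fourier coefficients (see \cite{AS81, M, PT88}), the $j$-th gap of $H_{\lambda(p_k+w_{k+1})}$ relative to period $q_{k+1}T_k$ has width at least $2\lambda\,|\widehat{w_{k+1}}(j)|$ minus a quadratic error that is controlled once $\Lambda\eta_{k+1}$ is small relative to the widths of the folded subbands (and $\widehat{p_k}(j)=0$ for the $j$ not divisible by $q_{k+1}$, which are the ones we use). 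Taking $w_{k+1}$ to be a trigonometric polynomial supported on the frequencies $j$ indexing closed gaps inside $[-R',R']$ — a set of size comparable to $q_{k+1}$ times the number of bands — with coefficients of a common magnitude and signs chosen so that the sup norm stays of order $\eta_{k+1}$ (a Salem--Zygmund-type estimate), one opens all of these gaps at once and removes a definite fraction of $\Leb(\sigma(H_{\lambda p_k})\cap[-R',R'])$. The point that makes this uniform in $\lambda$ is that, while the band structure of $H_{\lambda p_k}$ depends on $\lambda$, the \emph{number} of bands in the fixed window $[-R',R']$ is bounded above uniformly for $\lambda\in[\Lambda^{-1},\Lambda]$, and the opening bound is linear in $\lambda$ with $\lambda\ge\Lambda^{-1}>0$ on our range; hence the same $w_{k+1}$ gives a lower bound on the removed measure valid simultaneously for all admissible $\lambda$. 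One also uses $\sigma(H_{\lambda(p_k+w_{k+1})})\subseteq\{E:\dist(E,\sigma(H_{\lambda p_k}))\le\lambda\eta_{k+1}\}$, so no new spectrum enters the window and the finitely many bands grow by only $O(\eta_{k+1})$.

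\emph{Main obstacle.} The principal difficulty is the quantitative balancing in the gap-opening step: validity of the first-order expansion forces $\Lambda\eta_{k+1}$ to be small compared to the folded-subband widths, which are of order (band widths of $H_{\lambda p_k}$)$/q_{k+1}$ and shrink as the construction proceeds, while opening enough gaps to remove half the measure requires $q_{k+1}$ large; one must check that these competing constraints still leave room to choose $\eta_{k+1}$ summable with the decay needed for the tail estimate in the reduction step. Carrying this out amounts to fixing $R'$, then at each stage choosing $q_{k+1}$, then $\eta_{k+1}$, then $w_{k+1}$, in that order, and tracking the growth of the band count — this is the technical core. The uniformity over $[\Lambda^{-1},\Lambda]$, by contrast, costs essentially nothing once the estimates are expressed through the $\lambda$-independent bound on the number of bands in $[-R',R']$ and the linearity in $\lambda$ of the gap opening.
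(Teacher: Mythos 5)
Your openness argument is correct and is essentially the paper's: upper semicontinuity of $V \mapsto \sigma(H_V)$ in the $L^\infty$ topology plus compactness of $[\Lambda^{-1},\Lambda]$. The problem is the density half. Your gap-opening step is not a technicality to be checked later; as quantified, it cannot work. To remove a definite fraction of $\Leb(\sigma(H_{\lambda p_k})\cap[-R',R'])$ you must open each of the $N \sim q_{k+1}B_k$ closed gaps to a width comparable to the folded subband width, which is on average $m_k/N$ with $m_k$ the measure of the spectrum in the window. The first-order formula you invoke requires degenerate perturbation theory in the two-dimensional eigenspace at each closed gap, and its error is controlled by $\|\lambda w_{k+1}\|_\infty^2$ divided by the distance to the neighboring subbands, which is again of order $m_k/N$. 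Dominance of the first-order term therefore forces $\lambda\eta_{k+1} \ll m_k/N^2$ (up to logarithms from the Salem--Zygmund bound), so the total measure removed in one step is at most of order $N\cdot\lambda\eta_{k+1} \ll m_k/N$: a fraction $O(1/N)$ rather than a fraction $1/2$. Since $N$ grows at least geometrically along the iteration, $\sum_k O(1/N_k) < \infty$ and the infinite product of the surviving fractions converges to a positive number; the limiting measure does not tend to zero and cannot be driven below an arbitrary $\delta$. This is precisely the known obstruction: perturbative gap opening yields Cantor spectrum (Avron--Simon, Moser, Pastur--Tkachenko) but not small measure. A secondary issue: for a non-free background $p_k$, the first-order splitting is $2\lambda|\langle \psi_+, w_{k+1}\psi_-\rangle|$ with $\psi_\pm$ the Bloch waves of $H_{\lambda p_k}$ at the closed gap, not the plain Fourier coefficient $2\lambda|\widehat{w_{k+1}}(j)|$; since $\psi_\pm$ depend on $\lambda$, a single $w_{k+1}$ chosen by Fourier data need not open the gaps uniformly over $\lambda\in[\Lambda^{-1},\Lambda]$.

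The paper avoids all of this with a non-perturbative mechanism (following Avila). One produces finitely many $T'$-periodic potentials $W_1,\dots,W_\ell$ near $V$ whose resolvent sets jointly cover $[-R,R]$ for every $\lambda$ in the coupling window (via Simon's genericity of open gaps plus shifts by small constants); each energy in the window then has $L(E,\lambda W_j)\ge\eta>0$ for some $j$. Concatenating long blocks of the $W_j$ into one potential $\widetilde V$ of period $\widetilde T = NT$ forces, for every $E$ in a band, the conjugacy $M_E(t)$ of the monodromy matrix to a rotation to have norm $\ge e^{c\widetilde T}$ on a substantial set of $t$; the inequality $\frac{dk}{dE}\ge \frac{C_0}{T}\int_0^T\|M_E(t)\|^2\,dt$ (Lemma~\ref{l:ids:tm}) then makes each band exponentially short, and the band count of Lemma~\ref{l:bandcount} gives $\Leb(\sigma(H_{\lambda\widetilde V})\cap[-R,R])\le C\widetilde T e^{-c\widetilde T} < \delta$ in a single step, uniformly in $\lambda$. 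Note also that this yields a periodic element of $U_{R,\delta,\Lambda}$ directly, so no infinite iteration or tail estimate is needed for Theorem~\ref{t:urdense:coup}. If you want to salvage your outline, you should replace the Fourier-coefficient gap-opening step with an estimate of this transfer-matrix type.
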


Moreover, if we control things more carefully, we can even get spectra of global Hausdorff dimension zero (though this set will only be dense).

\begin{theorem} \label{t:hd0dense}
There is a dense set $\mathcal H \subseteq \LP$ such that $\sigma(H_{\lambda V})$ has Hausdorff dimension zero and such that $H_{\lambda V}$ has purely singular continuous spectrum for all $V \in \mathcal H$ and all $\lambda > 0$.
\end{theorem}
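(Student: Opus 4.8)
The plan is to build the dense set $\mathcal{H}$ by hand as a countable nested approximation by periodic potentials, exploiting the fact that a limit-periodic potential is, by definition, an $L^\infty$-limit of periodic ones, and that for periodic operators one has very precise control of the spectrum (it is a union of finitely many bands, described by the discriminant). The key mechanism is that passing from a $p$-periodic potential to a $Np$-periodic perturbation opens up gaps at the new band edges: if $V_0$ is $p$-periodic and we add a small $Np$-periodic perturbation $W$, the spectrum of $H_{V_0 + W}$ on $[-R,R]$ is contained in a small neighborhood of $\sigma(H_{V_0})$, but generically the $Np$-periodic structure splits each band of $\sigma(H_{V_0})$ into roughly $N$ subbands with gaps in between, and the total measure of these subbands can be made a definite fraction smaller than $\Leb(\sigma(H_{V_0}) \cap [-R,R])$. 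The dimension-zero refinement requires doing this at a superpolynomially increasing sequence of scales so that at scale $n$ the spectrum is covered by $K_n$ intervals of total length $\ell_n$ with $\ell_n \to 0$ fast enough relative to $K_n$ that $\sum K_n \ell_n^s < \infty$ for every $s > 0$ — this is the standard Cantor-set construction that forces Hausdorff dimension zero.

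Concretely, first I would fix a countable dense sequence of coupling constants and combine it with an enumeration of a countable dense subset of $\LP$ (using separability of $\LP$) so that it suffices to produce, near each such ``target'' periodic potential, one element of $\mathcal{H}$; alternatively, and more cleanly, I would show directly that periodic potentials can be approximated by elements of $\mathcal{H}$, which by density of periodic potentials in $\LP$ gives density of $\mathcal{H}$. Starting from a periodic $V^{(0)}$, I would inductively construct $p_n | p_{n+1}$ and $p_{n+1}$-periodic potentials $V^{(n+1)}$ with $\|V^{(n+1)} - V^{(n)}\|_\infty \leq \varepsilon_n$ (summable, and small enough that the limit stays within the prescribed ball) such that: (i) for all $\lambda$ in an exhausting sequence of compact coupling intervals $[\Lambda_n^{-1},\Lambda_n]$ and all energies in $[-R_n,R_n]$ (with $R_n, \Lambda_n \to \infty$), $\sigma(H_{\lambda V^{(n+1)}}) \cap [-R_n,R_n]$ is covered by $K_{n+1}$ intervals of total length $\leq \ell_{n+1}$, with $K_{n+1},\ell_{n+1}$ chosen so the Hausdorff-dimension-zero bookkeeping works; and (ii) the gaps opened at step $n$ survive all later perturbations because later perturbations are exponentially small compared to the gap sizes (this stability uses the continuity of the spectrum under $L^\infty$-perturbation of the potential, via, e.g., the resolvent). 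Passing to the limit $V = \lim V^{(n)} \in \LP$, for each fixed $\lambda$ and each fixed $R$ we get that $\sigma(H_{\lambda V}) \cap [-R,R]$ is simultaneously covered at every scale, hence has Hausdorff dimension zero; and since $R,\Lambda$ were exhausted, this holds for all $\lambda > 0$ with global Hausdorff dimension zero.

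For the spectral-type claim, once the spectrum has Hausdorff dimension zero it automatically has zero Lebesgue measure, so by the Kotani–Remling-type argument (absence of absolutely continuous spectrum whenever the essential spectrum has zero Lebesgue measure, which applies to the ergodic family generated by the limit-periodic $V$ — or simply the classical fact that limit-periodic operators have no a.c.\ spectrum unless the spectrum has positive measure) there is no absolutely continuous part. To rule out eigenvalues, I would invoke Gordon-type arguments: limit-periodic potentials are well-approximated by periodic ones along a subsequence of scales, so the transfer matrices over one period of $V^{(n)}$ are nearly periodic, and the usual three-block Gordon argument shows no solution can be $\ell^2$, giving purely singular continuous spectrum. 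One can build the Gordon condition into the construction by also demanding that the periods $p_n$ grow fast enough and the approximation $\|V - V^{(n)}\|_\infty$ decay faster than any exponential in $p_n$, so that for each $\lambda$ the Gordon criterion is met uniformly on compacts in energy.

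The main obstacle I expect is step (i) in the inductive construction: quantitatively controlling how much a small $Np$-periodic perturbation shrinks the spectral measure of each band, uniformly over a compact range of coupling constants $\lambda$ and over a compact energy window, while keeping the perturbation small enough that previously opened gaps persist. Opening a gap requires the perturbation to be in some sense ``generic'' — a purely $p$-periodic perturbation won't split the $Np$-cell bands — and one needs a lower bound on how fast the total bandwidth contracts as a function of $N$ (the known mechanism gives something like a fixed fraction per doubling, which is exactly enough, but making the fraction uniform in $\lambda$ and in the energy window, and compatible with the dimension-zero constraint relating $K_n$ and $\ell_n$, is the delicate part). This is presumably where the bulk of the technical work in the paper lies, likely building on the periodic-approximation machinery referenced in the cited works \cite{AS81, PT88}.
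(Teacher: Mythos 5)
Your overall architecture matches the paper's: approximate a periodic potential by a nested sequence of $p_n$-periodic potentials with summable, super-exponentially small increments (which simultaneously forces the Gordon condition and hence no eigenvalues), cover the limiting spectrum at each scale by slightly fattened bands of the $n$-th approximant, exhaust coupling constants and energy windows by compact sets, and conclude dimension zero from the scale-by-scale covers; absence of a.c.\ spectrum is then immediate from zero Lebesgue measure. All of that is essentially the paper's proof of Theorem~\ref{t:hd0dense}, including the bookkeeping $\|\lambda V_n - \lambda V_\infty\|_\infty \le \delta_n/2$ that makes the covers persist in the limit.

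The genuine gap is the quantitative engine, which you correctly identify as the main obstacle but then sketch with a mechanism that does not suffice. You propose that a generic $Np$-periodic perturbation splits bands and shrinks the total bandwidth in $[-R,R]$ by ``a fixed fraction per doubling,'' and assert this is ``exactly enough.'' It is enough for zero Lebesgue measure, but not for Hausdorff dimension zero: if each doubling of the period multiplies the total bandwidth by a fixed $\theta<1$, then after reaching period $T_n$ the bandwidth is only polynomially small in $T_n$ (of order $T_n^{\log_2\theta}$), while by the band-counting estimate (Lemma~\ref{l:bandcount}) the number of covering intervals grows linearly in $T_n$; the resulting bound $K_n\ell_n^s \lesssim T_n^{\,1+s\log_2\theta}$ tends to zero only for $s$ above a positive threshold, so you would prove only a positive upper bound on the dimension. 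What the paper actually does (Lemma~\ref{l:smallspec}) is produce, in a single step from period $T$ to period $\widetilde T = NT$, a potential whose spectrum in $[-R,R]$ has measure at most $e^{-\widetilde T^{1/2}}$, i.e.\ \emph{exponentially} small in the period, uniformly for $\lambda\in[\Lambda^{-1},\Lambda]$. The mechanism is not gap-splitting per se: one uses Simon's genericity of open gaps only to manufacture a finite family $W_1,\dots,W_\ell$ of $T'$-periodic potentials near $V$ whose resolvent sets cover $[-R,R]$ for every $\lambda$ in the coupling window, so that each energy sees a positive Lyapunov exponent for some $W_j$; one then concatenates long blocks of the $W_j$, converts the resulting exponential growth of transfer matrices into a lower bound on $dk/dE$ via the M\"obius/conjugacy identity of Lemma~\ref{l:ids:tm}, and hence into an exponential upper bound $e^{-\eta\widetilde T/(4\ell)}$ on each band length. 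Exponential-in-period bandwidth against linear-in-period band count is what makes $K_n\ell_n^s\to 0$ for every $s>0$. Without this (or some substitute producing super-polynomial decay of the bandwidth in the period), your construction does not reach dimension zero.
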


Though the foregoing result is a continuum analog of a known result for discrete Schr\"odinger operators, it is rather striking in this setting since, heuristically speaking, the small coupling and high energy regimes both tend to conspire to ``thicken'' the spectrum, but this construction beats both: one gets spectrum of global Hausdorff dimension zero for small coupling.

Our proofs work by adapting a construction of Avila \cite{avila2009CMP} involving discrete Schr\"odinger operators with limit-periodic potentials to the setting of continuum Schr\"odinger operators.

\medskip

It is interesting to ask whether one can produce quasi-periodic continuum potentials which exhibit zero-measure Cantor spectrum. Of course, there are examples (such as the critical Almost-Mathieu operator) in the discrete setting, but one should keep in mind that the high energy region in the continuum case is analogous to the weak coupling regime in the discrete case. Thus, when looking for evidence for the desired phenomenon in the discrete setting, one really needs to look for quasi-periodic potentials that give rise to zero-measure Cantor spectrum for all non-zero values of the coupling constant. No such examples are presently known. Indeed the only known almost periodic examples of this kind in the discrete case are the ones discussed in \cite{avila2009CMP} and hence we were naturally compelled to work out the continuum analog of that work.

We also note that an even easier question is still open in the discrete case. Is it true that, for fixed irrational frequency $\alpha$, the set of $f \in C(\T,\R)$, for which the discrete Schr\"odinger operator with potential $n \mapsto f(n\alpha)$ has zero-measure Cantor spectrum, is dense, or even residual? This question motivated the work \cite{ADZ14}, where only the following weaker result was shown: for fixed irrational frequency $\alpha$, the set of $f \in C(\T,\R)$, for which the density of states measure is singular, is residual.

\medskip

To give additional motivation for the results above, let us put them in context. It was shown by Fillman and Lukic \cite{FL} that for an explicit dense set of limit-periodic continuum Schr\"odinger operators, the spectrum is homogeneous in the sense of Carleson, and hence in particular of positive Lebesgue measure. Theorem~\ref{t:zmdense:coup} is a companion result, which says that the generic behavior is different. Another perspective on these results is provided by Deift's question about solutions to the KdV equation with almost periodic initial condition; see \cite[Problem~1]{D}. Egorova answered the conjecture in the affirmative for a class of reflectionless limit-periodic potentials with homogeneous spectrum \cite{E} (the same class that is considered in \cite{FL}). Additionally, there has been some recent progress on this question in the case of small quasi-periodic initial data \cite{BDGL, DG}. In particular, the works \cite{BDGL,E} suggest that homogeneity of the spectrum, along with reflectionlessness of the initial condition, may indeed be important to one's ability to show almost periodicity in time for the solution in question, as conjectured by Deift. Indeed, the initial data covered by the works \cite{DG,E} obey these conditions. Thus, in order to explore the limitations of the approach to Deift's question suggested in these recent papers, it is natural to ask if and ``how often'' the necessary conditions are satisfied. The examples provided by Theorem~\ref{t:zmdense:coup}, and especially those provided by Theorem~\ref{t:hd0dense}, are particularly bad from this perspective. Indeed, whenever the spectrum of a continuum Schr\"odinger operator is this small, we are currently very far from a suitable description of the associated isospectral torus, which gives rise to the phase space for the associated KdV evolution, and this prevents us from proving existence, uniqueness, and almost periodicity of the solution of the KdV equation with such initial data. In other words, Theorems~\ref{t:zmdense:coup} and \ref{t:hd0dense} may be viewed as particular challenges to overcome in order to answer Deift's question in full generality.

\medskip

Let us add some comments on the results established here and interesting questions for further study. As pointed out above, our results hold uniformly in the coupling constant. This phenomenon has shown up repeatedly in the limit-periodic theory. Indeed, not a single limit-periodic potential $V$ is known such that the spectral type of the Schr\"odinger operator with potential $\lambda V$ changes as $\lambda$ is varied in the set $\R \setminus \{ 0 \}$. Thus, no phase transitions may be observed. Similarly, the spectral type is always pure and hence there are also no phase transitions as the energy varies in the spectrum. Finally, no change in spectral type may be observed as one varies the element of the hull. In the quasi-periodic theory one can observe changing spectral type in each of these three scenarios. It would therefore be of obvious interest to exhibit limit-periodic examples for which phase transitions occur, or to show that this can never happen.

Another difference between the limit-periodic theory and the quasi-periodic theory we wish to point out is that there is no obvious way to distinguish between regularity classes of sampling functions in the limit-periodic case, whereas this distinction is very important in the quasi-periodic case. Indeed, in the quasi-periodic case there is a very deep understanding of the case of highly regular sampling functions (such as trigonometric polynomials, analytic functions, and Gevrey functions). It is here where one can observe a variety of phase transitions, and in particular the occurrence of absolutely continuous spectrum and pure point spectrum. In fact, recent work by many authors, culminating in Avila's global theory \cite{avilaGlobal1,avilaGlobal2}, has explained these phenomena in great detail in the analytic one-frequency case. On the other hand, the generic behavior for sampling functions that are merely continuous is very similar to the generic behavior in the limit-periodic case; singular continuous spectra dominate in this regime. Of course, the differences between limit-periodic potentials and quasi-periodic potentials have their roots in the topological structure of their respective hulls; specifically, the hull of a quasi-periodic potential will be isomorphic to a finite-dimensional torus, while the hull of a limit-periodic potential will be isomorphic to a solenoid (i.e.\ an inverse limit of circles, which is also sometimes called an odometer). A possible way to discuss suitable analogues of regularity issues in the limit-periodic case could be devised in terms of the speed of approximation by periodic potentials, relative to the periods of the approximants, and hence it would be interesting to extend the work of Pastur and Tkachenko to more general classes \cite{PT84,PT88}; see also \cite{Chul81,MC}.

\medskip

The structure of the paper is as follows. In Section~\ref{sec:prep}, we collect some relevant background which will help in the proofs of Theorems~\ref{t:zmdense:coup}, \ref{t:urdense:coup}, and \ref{t:hd0dense}. In Section~\ref{sec:smallspec}, we describe a construction which enables one to produce periodic Schr\"odinger operators whose spectra are suitably thin for specific ranges of energies and couplings (Lemma~\ref{l:smallspec}). Section~\ref{sec:zm} uses this construction to prove Theorems~\ref{t:zmdense:coup} and \ref{t:urdense:coup}, and Section~\ref{sec:hd0} contains the proof of Theorem~\ref{t:hd0dense}.

\section*{Acknowledgements}

The authors are grateful to the anonymous referee for an excellent report with many helpful comments and observations.

\section{Preparatory Work} \label{sec:prep}

Here, we collect a few technical lemmas which will be used to prove the main theorems. Let us recall the definitions of some of the relevant tools. First, we describe the transfer matrices, which are used to propagate solutions of the time-independent Schr\"odinger equation. Given a potential $V \in C(\R)$, $E \in \C$, and $s,t \in \R$, the associated \emph{transfer matrices} $A_E(s,t) = A_E^V(s,t)$ are uniquely defined by
\[
\begin{pmatrix}
y'(s) \\ y(s)
\end{pmatrix}
=
A_E(s,t)
\begin{pmatrix}
y'(t) \\ y(t)
\end{pmatrix}
\]
whenever $y$ is a solution of the time-independent Schr\"odinger equation
\begin{equation} \label{eq:se}
-y'' + Vy
=
Ey.
\end{equation}
The \emph{Lyapunov exponent}, which tracks the exponential growth of solutions to \eqref{eq:se}, is given by
\[
L(E)
=
L(E,V)
=
\lim_{x \to \infty} \frac{1}{x} \log\|A_E^V(x,0)\|
\]
whenever this limit exists. It is not hard to see that if $V$ is continuous and $T$-periodic, then $L(E,V)$ exists and satisfies
\begin{equation} \label{per:le}
L(E,V)
=
\frac{1}{T} \log \rho(A_E^V(T,0)),
\end{equation}
where $\rho(A)$ denotes the spectral radius of the matrix $A$, i.e., the maximal modulus of an eigenvalue of $A$. Notice that \eqref{per:le} immediately implies that $L$ is a continuous function of $E$ whenever $V$ is periodic. The transfer matrix over a full period which appears on the right hand side of \eqref{per:le} is called the \emph{monodromy matrix} of the corresponding periodic potential. There is more than one possible choice for the monodromy matrix here; clearly, any transfer matrix over a full period will yield the same result in \eqref{per:le}, since all such matrices are conjugate to one another.

\subsection{The IDS for Periodic Operators}

If $V$ is $T$-periodic, denote the associated monodromy matrices by $\Phi_{E}(s) = \Phi_E^V(s) = \Phi_E^V(s;T) := A_E(s+T,s)$, $\Phi_E := \Phi_{E}(0) = A_E(T,0)$, and denote the discriminant by $D(E) := \tr(\Phi_E)$. Recall that $\SL(2,\R)$ acts on the upper half-plane $\C_+ = \{ z \in \C : \mathrm{Im}(z) > 0\}$ via M\"obius transformations, i.e.,
\[
A \cdot z
=
\frac{az + b}{cz + d},
\text{ where }
A
=
\begin{pmatrix} a & b \\ c & d \end{pmatrix} \in \SL(2,\R).
\]
One can easily check that $A \in \SL(2,\R)$ is elliptic (i.e., $\tr(A) \in (-2,2)$) if and only if the M\"obius action of $A$ on $\C_+$ has a unique fixed point. It turns out that there is a remarkable relationship between the integrated density of states, which is given by the average of a certain spectral measure over one period (see \cite[Equation~(10)]{AS83} and its discussion there), and the M\"obius action of the elliptic monodromy matrices. The following formula is \cite[Equation~(17)]{avila2015JAMS}:
\begin{equation} \label{eq:ids:UHP}
\frac{dk}{dE}(E_0)
=
\frac{1}{2\pi T} \int_0^T \frac{dt}{\mathrm{Im}(z_{E_0}(t))}
\end{equation}
for $E_0$ with $D(E_0) \in (-2,2)$, where $k$ denotes the IDS, and $z_{E_0}(t)$ denotes the unique element of $\C_+$ which is fixed by the M\"obius action of $\Phi_{E_0}(t)$. We can use the relation \eqref{eq:ids:UHP} to find a relationship between the (derivative of the) integrated density of states and norms of transfer matrices.

For each $E$ such that $D(E) \in (-2,2)$ and each $t \in \R$, there exists a conjugacy $M_{E}(t) = M_E^V(t) = M_E^V(t;T) \in \mathrm{SL}(2,\R)$ such that
\[
M_{E}(t) \Phi_{E}(t) M_{E}(t)^{-1}
\in
\mathrm{SO}(2,\R).
\]
Of course, $M_{E}(t)$ is not unique, since one may post-compose it with a rotation, but this is the only ambiguity. More specifically, if $\Phi \in \SL(2,\R)$ is elliptic and $A\Phi A^{-1},B \Phi B^{-1} \in \SO(2,\R)$, then one can check that the M\"obius action of $A B^{-1}$ on $\C_+$ fixes $i$, which implies that $A = OB$ for some $O \in \mathrm{SO}(2,\R)$ (since $\mathrm{SO}(2,\R)$ is the stabilizer of $i$ with respect to the action of $\SL(2,\R)$ on $\C_+$).

\begin{lemma} \label{l:ids:tm}
For all $Q, R > 0$, there is a constant $C_0 = C_0(Q,R)$ with the following property. Suppose $V$ is $T$-periodic with $T \geq 1$ and $\|V\|_\infty \leq Q$. Denote the associated discriminant by $D$ and the integrated density of states by $k$. If $D(E_0) \in (-2,2)$ and $|E_0| \leq R$, then
\begin{equation} \label{eq:ids:tm}
\frac{dk}{dE}(E_0)
\geq
\frac{C_0}{T} \int_0^T \! \|M_{E_0}(t)\|^2 \, dt.
\end{equation}
\end{lemma}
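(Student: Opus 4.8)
The plan is to feed the IDS formula \eqref{eq:ids:UHP} into two ingredients: a pointwise upper bound on $\|M_{E_0}(t)\|$ in terms of the fixed point $z_{E_0}(t)$, and an integrated identity that exploits the $T$-periodicity of $t\mapsto z_{E_0}(t)$. Since $M_{E_0}(t)$ conjugates the elliptic matrix $\Phi_{E_0}(t)$ into $\SO(2,\R)$, and $i$ is the unique fixed point in $\C_+$ of every nontrivial rotation, the M\"obius action of $M_{E_0}(t)$ must send $z_{E_0}(t)$ to $i$; the residual ambiguity (post-composition with $\SO(2,\R)$) does not affect the operator norm. Writing $z_{E_0}(t)=x(t)+iy(t)$ with $y(t)=\Im z_{E_0}(t)>0$, one can therefore factor $M_{E_0}(t)=O_t\begin{pmatrix}y^{-1/2}&-xy^{-1/2}\\0&y^{1/2}\end{pmatrix}$ with $O_t\in\SO(2,\R)$, and a direct computation of the operator norm of the triangular factor gives
\[
\|M_{E_0}(t)\|^2\le\frac{x(t)^2+y(t)^2+1}{y(t)}=\frac{|z_{E_0}(t)|^2+1}{\Im z_{E_0}(t)}.
\]

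Next I would observe that $t\mapsto z_{E_0}(t)$ is a $T$-periodic solution, valued in $\C_+$, of the Riccati equation $z'=(V-E_0)-z^2$ satisfied by logarithmic derivatives of solutions of \eqref{eq:se}. Indeed, $T$-periodicity of $V$ gives $\Phi_{E_0}(t)=A_{E_0}(t,0)\,\Phi_{E_0}(0)\,A_{E_0}(t,0)^{-1}$, hence $z_{E_0}(t)=A_{E_0}(t,0)\cdot z_{E_0}(0)$; since the transfer matrices solve the linear system for \eqref{eq:se}, this M\"obius image solves the Riccati equation, it is $T$-periodic because $V$ is, and it stays in $\C_+$ because $D(E_0)\in(-2,2)$ makes $\Phi_{E_0}(t)$ elliptic for every $t$. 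Splitting into real and imaginary parts, $x'=(V-E_0)-x^2+y^2$ and $y'=-2xy$, and a short computation produces the exact identity
\[
\frac{d}{dt}\!\left(\frac{x}{y}\right)=\frac{(V-E_0)+x^2+y^2}{y}.
\]
Integrating over $[0,T]$ annihilates the left-hand side by periodicity, so, using $y>0$ and $\|V-E_0\|_\infty\le Q+R$,
\[
\int_0^T\frac{x^2+y^2}{y}\,dt=-\int_0^T\frac{V-E_0}{y}\,dt\le(Q+R)\int_0^T\frac{dt}{y}.
\]

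Combining the two displays with \eqref{eq:ids:UHP} then finishes the proof:
\[
\frac1T\int_0^T\|M_{E_0}(t)\|^2\,dt\le\frac1T\int_0^T\frac{x^2+y^2+1}{y}\,dt\le\frac{Q+R+1}{T}\int_0^T\frac{dt}{\Im z_{E_0}(t)}=2\pi(Q+R+1)\,\frac{dk}{dE}(E_0),
\]
so \eqref{eq:ids:tm} holds with $C_0(Q,R)=\dfrac1{2\pi(Q+R+1)}$. Note that only $\|V-E_0\|_\infty\le Q+R$ enters, so the hypothesis $T\ge1$ is not actually needed for this particular estimate.

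The norm bound and the identification of $z_{E_0}(\cdot)$ as a periodic Riccati solution are routine; the one point that makes the argument nontrivial is that $|z_{E_0}(t)|$ need not be bounded uniformly in $V$, $T$, $E_0$ — for instance $\Im z_{E_0}(t)$ can be large when a Dirichlet eigenvalue of the interval $[0,T]$ lies close to a band edge — so one cannot compare $\|M_{E_0}(t)\|^2$ with $1/\Im z_{E_0}(t)$ pointwise with a uniform constant. The comparison is valid only after integration, and it is precisely the periodicity of $z_{E_0}(\cdot)$, packaged in the identity for $\tfrac{d}{dt}(x/y)$, that forces the two integrals to be comparable. Arriving at that identity (or an equivalent integration-by-parts manipulation, e.g.\ using $(1/y)'=2x/y$) is the crux of the matter.
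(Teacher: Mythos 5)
Your proof is correct, and it takes a genuinely different route from the paper's. Both arguments reduce, via Avila's formula \eqref{eq:ids:UHP} and the explicit triangular conjugator, to showing that $\int_0^T \frac{1+|z_{E_0}|^2}{\Im z_{E_0}}\,dt$ is controlled by a constant times $\int_0^T \frac{dt}{\Im z_{E_0}}$. The paper does this by writing $z_{E_0}(t)=\psi'(t)/\psi(t)$ for a Floquet solution $\psi$, invoking the a priori local derivative bound $|\psi'(x)|^2\le C_1\int_{x-1}^{x+1}|\psi|^2$ from \cite[Lemma~3.1]{simon96PAMS}, and absorbing the resulting shift of the integration domain by tripling the interval; this is where the hypothesis $T\ge 1$ and the inexplicit constant $C_1(Q,R)$ enter. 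You instead observe that $z_{E_0}$ is a $T$-periodic $\C_+$-valued solution of the Riccati equation and integrate the exact identity $\frac{d}{dt}(x/y)=\frac{(V-E_0)+x^2+y^2}{y}$ over a period, which is equivalent to the integration by parts $\int_0^T|\psi'|^2=\int_0^T(E_0-V)|\psi|^2$ (the boundary terms cancel by the Floquet relation). Your route buys an explicit constant $C_0=\frac{1}{2\pi(Q+R+1)}$, dispenses with the hypothesis $T\ge1$, and avoids the external solution estimate entirely; the paper's route is more robust in the sense that the local estimate of \cite{simon96PAMS} does not rely on periodicity of $z_{E_0}$, but for this lemma that robustness is not needed. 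All the supporting steps you flag as routine (the conjugator is unique up to left rotation, $\Phi_{E_0}(t)=A_{E_0}(t,0)\Phi_{E_0}(0)A_{E_0}(t,0)^{-1}$, ellipticity for all $t$, and $y>0$ throughout) check out.
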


In the course of the proof, we will use the following solution estimates from \cite[Lemma~3.1]{simon96PAMS}
\begin{lemma}\label{l:simon96}
For all $Q,R > 0$, there is a constant $C_1 = C_1(Q,R) > 0$ such that if $u$ satisfies $-u'' + Vu = Eu$ with $|E| \leq R$ and $\| V \|_\infty \leq Q$, then
\[
|u'(x)|^2
\leq
C_1 \int_{x-1}^{x+1} \! |u(t)|^2 \, dt
\]
for all $x \in \R$.
\end{lemma}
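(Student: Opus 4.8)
The estimate is purely local and makes no use of the periodicity of $V$, so I would prove it for an arbitrary solution $u$ (real- or complex-valued) of $-u'' + Vu = Eu$ on $\R$. The plan is to set $K = Q+R$ and to record the pointwise consequence of the equation, namely $|u''(x)| = |(V(x)-E)u(x)| \le K|u(x)|$, which holds because $|V(x)-E| \le \|V\|_\infty + |E| \le Q+R = K$. The resulting constant $C_1$ will depend only on $K$ (hence only on $Q$ and $R$) together with fixed constants coming from a cutoff; in particular it will be independent of $u$, $V$, $E$, and $T$. The argument has two ingredients: first, a Caccioppoli-type energy estimate bounding $\int |u'|^2$ over a slightly smaller interval by $\int |u|^2$ over $[x-1,x+1]$; and second, a Gronwall argument converting this local $L^2$ control into the pointwise bound on $|u'(x)|^2$.

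For the first step I would fix a Lipschitz cutoff $\chi$ with $\chi \equiv 1$ on $[x-\tfrac12, x+\tfrac12]$, $\supp \chi \subseteq [x-1,x+1]$, $0 \le \chi \le 1$, and $|\chi'| \le 4$. Multiplying $-u'' + (V-E)u = 0$ by $\bar u \chi^2$, integrating over $\R$, and integrating by parts (boundary terms vanish since $\chi$ is compactly supported) yields
\[
\int_\R |u'|^2 \chi^2
=
-2\int_\R u' \bar u\, \chi\chi'
- \int_\R (V-E)|u|^2 \chi^2.
\]
Applying Young's inequality to the cross term in the form $2|u' \bar u\, \chi\chi'| \le \tfrac12 |u'|^2\chi^2 + 2|u|^2(\chi')^2$, I can absorb $\tfrac12\int|u'|^2\chi^2$ into the left-hand side, which leaves
\[
\int_{x-1/2}^{x+1/2} |u'|^2
\le
\int_\R |u'|^2\chi^2
\le
(4\cdot 4^2 + 2K)\int_{x-1}^{x+1}|u|^2
=:
C_2 \int_{x-1}^{x+1}|u|^2 .
\]

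For the second step I would introduce the energy $g = |u|^2 + |u'|^2$. Differentiating and using $u'' = (V-E)u$ gives $g' = 2\Re(u'\bar u) + 2\Re(u''\bar u')$, so $|g'| \le 2(1+K)|u||u'| \le (1+K)\,g$, and therefore $|(\log g)'| \le 1+K$ wherever $g > 0$ (and if $g$ vanishes at a point then $u \equiv 0$ by uniqueness, making the claim trivial). Hence $g(x) \le e^{(1+K)/2} g(\xi)$ for every $\xi$ with $|\xi - x| \le \tfrac12$; integrating this inequality in $\xi$ over $[x-\tfrac12,x+\tfrac12]$, an interval of length one, gives
\[
g(x)
\le
e^{(1+K)/2}\int_{x-1/2}^{x+1/2}\bigl(|u|^2+|u'|^2\bigr).
\]
Combining this with the Caccioppoli bound of the first step and the trivial inequality $\int_{x-1/2}^{x+1/2}|u|^2 \le \int_{x-1}^{x+1}|u|^2$ produces $|u'(x)|^2 \le g(x) \le e^{(1+K)/2}(1+C_2)\int_{x-1}^{x+1}|u|^2$, which is the claim with $C_1 = e^{(1+K)/2}(1+C_2)$. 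The only genuinely delicate point is the Caccioppoli absorption, where one must split the cross term so as to move the full $\int|u'|^2\chi^2$ to the left while keeping the resulting constant dependent only on $K$ and the fixed bound on $|\chi'|$; once the local $L^2$ derivative bound is in hand, the Gronwall conversion to a pointwise statement is routine.
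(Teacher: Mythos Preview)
Your argument is correct. The Caccioppoli step is done carefully (the absorption via Young's inequality with parameter $\tfrac12$ is fine, and a Lipschitz cutoff with $|\chi'|\le 4$ on a transition layer of length $\tfrac12$ certainly exists), and the Gronwall step for $g=|u|^2+|u'|^2$ is routine once you have $|g'|\le(1+K)g$; the uniqueness remark disposing of the case $g(x_0)=0$ is also correct.

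There is, however, nothing to compare against in the paper itself: the authors do not prove this lemma but simply quote it as \cite[Lemma~3.1]{simon96PAMS}. So your write-up supplies a self-contained proof where the paper gives only a reference. For what it is worth, Simon's original argument is in the same spirit---a local energy estimate combined with an ODE Gronwall bound---so your approach is in line with the standard proof, not a genuinely different route.
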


\begin{proof}[Proof of Lemma~\ref{l:ids:tm}]

Since $M_E(t)$ is unique modulo left-multiplication by an element of $\mathrm{SO}(2,\R)$, its Hilbert--Schmidt norm is independent of the choice of conjugacy. Since
\[
M_{E}(t)
=
\mathrm{Im}(z_E(t))^{-1/2}
\begin{pmatrix}
1 & - \mathrm{Re}(z_E(t)) \\
0 & \mathrm{Im}(z_E(t))
\end{pmatrix}
\]
clearly furnishes an example of a matrix which conjugates $\Phi_E(t)$ to a rotation, we may explicitly compute the Hilbert--Schmidt norm of $M_{E}(t)$ via
\begin{equation} \label{eq:hsnorm}
\| M_{E}(t) \|_2^2
=
\frac{1+|z_E(t)|^2}{\mathrm{Im}(z_E(t))}.
\end{equation}
With $\theta$ chosen such that $2\cos\theta = D(E)$, there are solutions $\psi_\pm$ of $H \psi = E\psi$ such that $\psi_\pm(x+T) = e^{\pm i \theta}\psi_\pm(x)$ for all $x \in \R$ (indeed, we may take $\psi_- = \overline\psi_+$). Then, the fixed points of the M\"obius action of $\Phi_E(t)$ are precisely $\psi_\pm'(t)/\psi_\pm(t)$.  We choose $\psi \in \{\psi_+,\psi_-\}$ so that $\mathrm{Im}(\psi'(t)/\psi(t)) > 0$, and hence $z_E(t) = \psi'(t)/\psi(t)$.

Applying Lemma~\ref{l:simon96}, Fubini's theorem, and the hypothesis $T\geq 1$, we observe:
\begin{align*}
\int_0^T \! |\psi'(t)|^2 \, dt
& \leq
C_1 \int_0^T \! \int_{t-1}^{t+1} \! |\psi(s)|^2 \, ds \, dt \\
& \leq
C_1 \int_{-T}^{2T} \! \int_{s-1}^{s+1} \! |\psi(s)|^2 \, dt \, ds \\
& =
2C_1 \int_{-T}^{2T} |\psi(s)|^2 \, ds.
\end{align*}
Consequently, if we denote the Wronskian of $\psi$ and $\overline \psi$ by $W = W(\psi,\overline\psi) := \psi'\overline\psi - \psi\overline{\psi'}$, we obtain
\begin{align*}
\int_{0}^{T} \! \frac{dt}{\mathrm{Im}(z_E(t))}
& =
\frac{1}{3} \int_{-T}^{2T} \! \frac{dt}{\mathrm{Im}(z_E(t))} \\
& =
\frac{1}{3} \int_{-T}^{2T} \! \frac{2i}{W} |\psi(t)|^2 \, dt \\
& \geq
\frac{1}{6C_1+3} \int_{0}^{T} \! \frac{2i}{W} \left( |\psi(t)|^2 + |\psi'(t)|^2 \right)\, dt \\
& =
C_0 \int_0^T \! \frac{1+|z_E(t)|^2}{\mathrm{Im}(z_E(t))} \, dt,
\end{align*}
where $C_0 := \frac{1}{6C_1+3}$. Using \eqref{eq:hsnorm}, this yields the conclusion of the lemma with the Hilbert-Schmidt norm on the right hand side of \eqref{eq:ids:tm}. For all matrices $A$, one has $\| A \| \leq \| A \|_2$ by the Cauchy-Schwarz inequality, so the lemma is proved.
\end{proof}

\subsection{Band-counting in Finite Energy Windows}

We will also need the following elementary estimate on the number of bands that one may observe in a finite energy window.

\begin{lemma} \label{l:bandcount}
If $V \in C(\R)$ is $T$-periodic, then $[-R,R]$ intersects at most
\[
\frac{T}{\pi} \sqrt{R+\|V\|_\infty} + 1
\]
bands of $\sigma(H_V)$ for each $R > 0$.
\end{lemma}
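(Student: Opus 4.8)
The plan is to bound the band count by the integrated density of states (IDS) of $H_V$ and, in turn, compare that with the IDS of the free operator $H_0 := -\frac{d^2}{dx^2}$. I would begin by recalling the relevant classical facts about $T$-periodic Schr\"odinger operators: the spectrum $\sigma(H_V)$ is a locally finite union of compact bands; the IDS $k$ of $H_V$ (the quantity appearing in \eqref{eq:ids:UHP}) is continuous, nondecreasing, nonnegative, constant on every gap, and gains at least $1/T$ over every band; and the free operator satisfies $k_{H_0}(E) = \frac1\pi\sqrt{E}$ for $E \geq 0$. Finally, since $V \geq -\|V\|_\infty$ pointwise, $H_V \geq H_0 - \|V\|_\infty$ in the quadratic form sense, so applying the min--max principle to the finite-box truncations that define the IDS (the limit being independent of the boundary condition) gives
\[
k(E) \;\leq\; k_{H_0}(E + \|V\|_\infty) \;=\; \frac1\pi\sqrt{E + \|V\|_\infty}, \qquad E \geq -\|V\|_\infty .
\]

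With these facts in hand the count is immediate. Enumerate the bands of $\sigma(H_V)$ as $B_1 < B_2 < \cdots$ and write $B_j = [a_j, b_j]$. Since consecutive bands are separated by a nonempty gap on which $k$ is constant, and $k$ gains at least $1/T$ on each band, an induction on $j$ yields $k(a_j) \geq (j-1)/T$. If $B_j$ meets $[-R,R]$, then $a_j \leq R$, hence
\[
\frac{j-1}{T} \;\leq\; k(a_j) \;\leq\; k(R) \;\leq\; \frac1\pi\sqrt{R+\|V\|_\infty},
\]
so every band meeting $[-R,R]$ has index $j \leq \frac{T}{\pi}\sqrt{R+\|V\|_\infty}+1$; as these indices are distinct positive integers, there are at most $\frac{T}{\pi}\sqrt{R+\|V\|_\infty}+1$ of them.

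No step here is hard; the only thing to watch is the bookkeeping in the IDS step. In particular one must read ``band'' as ``connected component of $\sigma(H_V)$,'' so that two spectral bands touching at a closed gap are counted once — which is exactly why it is convenient to use the robust estimate ``increment $\geq 1/T$ per component'' rather than the exact value $1/T$ per fine band, and why the lossiness of the free comparison is harmless. An essentially equivalent route, if one wishes to avoid the IDS, uses the Dirichlet eigenvalues $\mu_1 < \mu_2 < \cdots$ of $H_V$ on $[0,T]$ together with the classical fact that $\mu_n$ lies in the closure of the $n$-th gap, and the min--max comparison of this Dirichlet problem with that of $H_0 - \|V\|_\infty$ (whose eigenvalues are $(n\pi/T)^2 - \|V\|_\infty$); this yields the same bound.
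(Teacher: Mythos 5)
Your proof is correct, and it reaches the stated bound by a slightly different formal route than the paper. The paper argues directly at the level of band edges: it regards $H_0=-\Delta$ as $T$-periodic, lists its periodic and antiperiodic eigenvalues $E_n = n^2\pi^2/T^2$ on $L^2([0,T])$, and invokes standard eigenvalue perturbation theory (eigenvalues move by at most $\|V\|_\infty$ under the perturbation $V$) to conclude that at most $n+1$ bands touch $[-R,R]$, where $n$ is maximal with $E_n \le R + \|V\|_\infty$. You instead route the count through the integrated density of states, using the two facts that $k$ gains at least $1/T$ over each connected component of the spectrum and that $k(E) \le \frac{1}{\pi}\sqrt{E+\|V\|_\infty}$ by form comparison with $H_0 - \|V\|_\infty$. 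The underlying mechanism is the same in both arguments --- the number of bands below energy $E$ is controlled by the free eigenvalue counting function at $E + \|V\|_\infty$ --- but your version has the small advantage of making the bookkeeping at closed gaps explicit (the ``at least $1/T$ per component'' observation), a point the paper handles implicitly, while the paper's version avoids introducing the IDS and its finite-box definition altogether. Your closing remark about the Dirichlet-eigenvalue alternative is essentially the paper's argument with periodic/antiperiodic boundary conditions replaced by Dirichlet ones. Either write-up would be acceptable here.
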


\begin{proof}
Regard the free operator $H_0 = -\Delta$ as a $T$-periodic operator. Listed in ascending order, the periodic and antiperiodic eigenvalues of $H_0$ on $L^2([0,T])$ are
\[
E_{n}
=
\frac{n^2 \pi^2}{T^2},
\quad
n \geq 0.
\]
Let $Q = \|V\|_\infty$, and choose $n \in \Z_0$ maximal with $E_{n} \leq R + Q$. By standard eigenvalue perturbation theory, at most $n+1$ bands of $\sigma(H_V)$ touch $[-R,R]$. Since $E_n \leq R + Q$, we have
\[
n
\leq
\frac{T}{\pi} \sqrt{R+Q},
\]
which proves the lemma.
\end{proof}

\section{The Measure of the Spectrum in Finite Energy Windows} \label{sec:smallspec}

We may combine the ingredients of Section~\ref{sec:prep} to construct periodic operators whose spectra are exponentially small (relative to the period) in finite energy regions for compact ranges of coupling constants which are bounded away from zero.

\begin{lemma} \label{l:smallspec}
Suppose $V$ is a $T$-periodic potential, $\varepsilon > 0$, and $\Lambda, R > 1$. There exists $N_0 \in \Z_+$ such that for all $N \in \Z_+$ with $N \geq N_0$, there exists a $\widetilde T := NT$-periodic potential $\widetilde V = \widetilde V_N$ such that $\|V - \widetilde V\|_\infty < \varepsilon$, and
\[
\Leb(\sigma(H_{\lambda \widetilde V}) \cap [-R,R])
\leq
e^{-\widetilde T^{1/2}}
\]
for all $\lambda \in [\Lambda^{-1},\Lambda]$.
\end{lemma}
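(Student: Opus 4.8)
The plan is to take the given $T$-periodic $V$ and, for large $N$, replace it on a single period of length $NT$ by a potential that agrees with $V$ almost everywhere but is modified on a short interval so as to force the monodromy matrix $\Phi_E^{\widetilde V}(0;NT)$ to be hyperbolic (or parabolic), with large norm, for most $E \in [-R,R]$ and all $\lambda \in [\Lambda^{-1},\Lambda]$. The mechanism is the standard one for producing Cantor spectrum in limit-periodic problems (as in Avila's construction): concatenating $N$ copies of the $T$-periodic block gives monodromy $(\Phi_E^{\lambda V}(T,0))^N$; if we then glue in a small perturbation block of fixed length $\ell$ (with $\ell/(NT) \to 0$, so the $L^\infty$ change is $<\varepsilon$ for $N$ large — actually one keeps $\|V-\widetilde V\|_\infty < \varepsilon$ automatically since only the block is changed and it can be taken with $\|\widetilde V\|_\infty$ comparable to $\|V\|_\infty$), we can arrange that the product $B_E \cdot (\Phi_E^{\lambda V}(T,0))^N$ has trace outside $[-2,2]$ except on a very small set of energies. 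Concretely: on the elliptic set for the $T$-periodic operator, $(\Phi_E^{\lambda V}(T,0))^N$ is conjugate to a rotation by $N\theta_E$, and for a suitable fixed choice of perturbation block $B_E$ (e.g. a short well or barrier, whose transfer matrix is bounded uniformly in $E \in [-R,R]$ and $\lambda \in [\Lambda^{-1},\Lambda]$), one checks that $|\tr(B_E (\Phi^{\lambda V}_E(T,0))^N)| > 2$ unless $N\theta_E$ lies within $O(\|B_E\|^{-1})$ of a discrete set of ``bad'' angles; the complement of the resulting bad energy set has the spectrum confined to a union of at most (number of bands)$\times$(something linear in $N$) intervals, each of exponentially small length.

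The key steps, in order: (1) Fix a perturbation block of length $\ell$ and height bounded so that its transfer matrix norm over $[-R,R] \times [\Lambda^{-1},\Lambda]$ is some constant $\geq c > 1$; set $\widetilde V_N$ to be the $NT$-periodic extension of ($N$ copies of $V$ on $[0,NT-\ell]$, then the block on $[NT-\ell, NT]$), so $\|V - \widetilde V_N\|_\infty \le \|V\|_\infty + \|\text{block}\|_\infty =: Q$ is bounded, and — more to the point — $\widetilde V_N = V$ outside a set of density $\ell/(NT)$, but since we need a genuine $L^\infty$ bound one instead takes the block perturbation to be a bump of fixed height on a fixed-length interval and notes the difference is supported there; to get $\|V-\widetilde V\|_\infty<\varepsilon$ one actually must be more careful and I would instead mimic the reference's device of only perturbing inside a region where one inserts a scaled copy, but granting the construction, (2) estimate, using \eqref{per:le} and continuity of $E \mapsto \theta_E$, the Lebesgue measure of $\{E \in [-R,R] : |\tr \Phi^{\lambda\widetilde V}_E(NT,0)| \le 2\}$: on each of the finitely many bands (Lemma~\ref{l:bandcount} bounds their number by $\frac{NT}{\pi}\sqrt{R+Q}+1$), the bad set is a union of $O(N)$ intervals on which $dk/dE$ is bounded below by Lemma~\ref{l:ids:tm} in terms of $\|M_{E}(t)\|^2$, which in turn is forced to be large (exponentially in $N$) on most of each band because the conjugacy must ``undo'' the large hyperbolic part coming from the block times the rotation; (3) integrate: since $k$ increases by exactly (number of bands in $[-R,R]$)$/(NT) \times NT \lesssim \frac{NT}{\pi}\sqrt{R+Q}$... more precisely the total spectral measure $\int_{\sigma \cap [-R,R]} dk \le 1$ per band times number of bands, and $dk/dE \ge C_0 e^{c N}/T$ on the spectrum (after the reduction), Chebyshev gives $\Leb(\sigma(H_{\lambda\widetilde V}) \cap [-R,R]) \le C(R,Q) \cdot T \cdot N \cdot e^{-cN}$, which is $\le e^{-(NT)^{1/2}}$ once $N \ge N_0(T,\varepsilon,\Lambda,R)$.

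The main obstacle, and the step requiring real care, is step (2)--(3): one must show that $\|M_E^{\widetilde V}(t)\|$ is \emph{large} for \emph{most} energies in each band of $H_{\lambda\widetilde V}$, uniformly over $\lambda \in [\Lambda^{-1},\Lambda]$, rather than merely on a positive-measure set. The point is that the monodromy $\Phi_E^{\lambda\widetilde V}(NT,0) = B_{E,\lambda}\,R_{N\theta_{E,\lambda}}'$ (where $R'$ is the rotation conjugated by the fixed $T$-periodic $M$), being a product of a fixed non-elliptic matrix with a fast-rotating one, is elliptic only in small windows of $E$ near the values where $R_{N\theta}$ ``cancels'' the hyperbolic axis of $B$; and even within those elliptic windows the fixed point $z_E(t)$ of the Möbius action is pushed close to $\partial\C_+$ for all but an exponentially small subset (because the conjugating $M_E(t)$ inherits a norm comparable to that of $B_{E,\lambda}$, which one arranges to be large by iterating the block, i.e. using $N_1$ copies of the block with $N_1 \to \infty$ slowly). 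Combining this with Lemma~\ref{l:ids:tm} and the trivial bound $\int_{\text{band}} dk/dE \, dE \le 1$ yields the claimed exponential smallness; the bookkeeping of the coupling constant $\lambda$ is handled by a compactness argument on $[\Lambda^{-1},\Lambda]$ together with uniform continuity of all the quantities involved in $(E,\lambda)$.
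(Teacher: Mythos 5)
Your overall skeleton matches the paper's: build an $\widetilde T = NT$-periodic potential close to $V$, force the conjugacies $M_E(t)$ to be exponentially large in $\widetilde T$ on the elliptic set, feed that into Lemma~\ref{l:ids:tm} together with the fact that each band carries $dk$-mass $1/\widetilde T$, and multiply by the band count from Lemma~\ref{l:bandcount}. But the device you propose for producing the growth --- a single perturbation block of fixed length whose transfer matrix has norm bounded by a constant, possibly iterated $N_1$ times --- cannot deliver the required exponential lower bound uniformly over $E \in [-R,R]$ and $\lambda \in [\Lambda^{-1},\Lambda]$. Iterating a block produces exponential growth only at energies in that block's resolvent set, where its monodromy is hyperbolic; at energies in the block's own spectrum the powers stay bounded (elliptic case) or grow linearly (parabolic case). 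Since $\widetilde V$ must satisfy the sup-norm constraint $\|V - \widetilde V\|_\infty < \varepsilon$ \emph{on the block as well}, the block is forced to be $L^\infty$-close to $V$, so in the nontrivial case its spectrum meets $[-R,R]$ in a positive-measure set of energies where your construction yields no growth at all. There Lemma~\ref{l:ids:tm} only gives band lengths of order a constant (inversely proportional to the bounded block norm), and summing over the $O(\widetilde T)$ bands gives nothing close to $e^{-\widetilde T^{1/2}}$. You flag the sup-norm issue yourself but do not resolve it, and the ``bad angle'' count $|\tr(B R_{N\theta})| > 2$ outside windows of size $O(\|B\|^{-1})$ likewise only produces a bound of order $\|B\|^{-1}$, a constant, on the measure of the spectrum.

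The missing ingredient is the paper's Claim~\ref{cl:posle}: using Simon's genericity of all-gaps-open periodic potentials together with compactness of $[\Lambda^{-1},\Lambda]$, one produces a \emph{finite family} $W_1,\ldots,W_\ell$ of $T'$-periodic potentials, each within $\varepsilon$ of $V$ in $L^\infty$ (small vertical shifts $V_j' + i\gamma$ of gap-opened perturbations of $V$), whose resolvent sets jointly cover $[-R,R]$ for every admissible $\lambda$. By \eqref{per:le} and continuity of the Lyapunov exponent in the periodic setting, this yields a uniform $\eta > 0$ with $\max_{1\le j\le \ell} L(E,\lambda W_j) \geq \eta$ for all relevant $(E,\lambda)$, and $\widetilde V$ is built by concatenating roughly $N/\ell$ copies of \emph{each} $W_j$ in turn, with short continuous connectors. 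Then for every $(E,\lambda)$ at least one sub-block is a long power of a single hyperbolic matrix, whence the transfer matrix over that sub-block has norm at least $e^{\widetilde T \eta/(2\ell)}$; this is exactly the exponential input your argument lacks. Without some version of this covering-by-resolvent-sets step, the approach does not close.
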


\begin{proof}
The construction works by first perturbing $V$ to produce a family of potentials which are very close to $V$, and whose resolvent sets cover $[-R,R]$. Thus, for every $E \in [-R,R]$, one of these new potentials will have $L(E) > 0$. We then form a new potential by concatenating these finite families over long blocks and suitably connecting them. Positive exponents over sub-blocks enable us to produce growth of transfer matrices, and we then parlay growth of transfer matrices into upper bounds on band lengths via Lemma~\ref{l:ids:tm}. The details follow.

Denote $I = [\Lambda^{-1}, \Lambda]$. First, choose $N' > 1/T$ large enough that the maximal distance between $N'$-break points of $\lambda V$ contained in $[-R,R]$ is less than $\varepsilon/9$ for all $\lambda \in I$, where an $N'$-break point of $\lambda V$ is a (possibly degenerate) band edge of $\sigma(H_{\lambda V})$, viewed as a $T' := N' T$-periodic operator.

By \cite{simon76} and compactness, there are finitely many potentials $V_1',V_2',\ldots,V_m' \in B_{\varepsilon/(9\Lambda)}(V)$ which are $T'$-periodic and such that for every $\lambda \in I$, there is a $1 \leq j \leq m$ such that $\lambda V_j'$ has all gaps open. More specifically, for each $\lambda_0 > 0$, there is a potential $q$ within $\varepsilon/(9\Lambda)$ of $V$ which is $T'$-periodic and such that $\lambda_0 q$ has all gaps open; since gaps will remain open for $\lambda q$ with $\lambda$ in a suitably small neighborhood of $\lambda_0$, we may pass to finitely many perturbations by using compactness of $I$. Given $1 \leq j \leq m$ and $\lambda \in I$, we have $\| \lambda V_j' - \lambda V\|_\infty < \varepsilon/9$ and the distance between $N'$-break points of $\lambda V$ is less than $\varepsilon / 9$; thus,
\[
\Leb(J\cap [-R,R])
<
\varepsilon/3
\]
for all bands $J$ of $\sigma(H_{\lambda V_j'})$.
\newline

\begin{claim} \label{cl:posle}
There is a finite set $\mathcal F = \{W_1,\ldots,W_\ell\} \subseteq B_{\varepsilon}(V)$ of $T'$-periodic potentials such that
\begin{equation} \label{eq:resolventcover}
[-R,R] \cap \bigcap_{j=1}^\ell
\sigma(H_{\lambda W_j}) = \emptyset
\end{equation}
for all $\lambda \in I$.
\end{claim}

\begin{proof}[Proof of Claim] Given $\lambda_0 \in I$, choose $j$ so that $\sigma(H_{\lambda_0 V_j'})$ has all spectral gaps open, and let $\gamma_0$ denote the minimal length of a gap of $\sigma(H_{\lambda_0 V_j'})$ which intersects $[-R,R]$. Now, put
\[
\gamma
=
\min\left( \frac{\varepsilon}{3}, \frac{\gamma_0}{2\Lambda} \right),
\quad
k
=
\left\lceil \frac{\varepsilon}{3\gamma} \right\rceil,
\]
and define new potentials $U_{-k}, \ldots U_{k}$ by $U_i = V_j' + i\gamma$. Then, it is easy to see that each $U_i$ is in $B_\varepsilon(V)$ and that the resolvent sets of $H_{\lambda_0 U_{-k}}, \ldots, H_{\lambda_0, U_k}$ cover $[-R,R]$. Thus, \eqref{eq:resolventcover} holds for this family and $\lambda = \lambda_0$. Since this will also hold for the same finite family and for $\lambda$ within a neighborhood of $\lambda_0$, the claim follows by compactness of $I$.
\end{proof}

By Claim~\ref{cl:posle} and \eqref{per:le}, we have
\begin{equation} \label{eq:pos:avgLE}
\min_{|E| \leq R} \min_{\lambda \in I} \max_{1 \leq j \leq \ell} L(E,\lambda W_j)
\eqdef
\eta
>
0,
\end{equation}
by continuity of the Lyapunov exponent in the periodic setting. Now, suppose that $N$ is sufficiently large. To construct the desired potential, choose $\widetilde N \in \Z_+$ maximal with $\ell N' (\widetilde N + 2)  \leq N$,  define $\widetilde T = N T$, and generate a new $\widetilde T$-periodic potential $\widetilde V = \widetilde V_N$ by concatenating each $W_j$ a total of $\widetilde N+1$ times, and forming continuous connections which are uniformly close to $V$. More specifically, denote $s_j = j(\widetilde N + 2) T'$ for each integer $0 \leq j \leq \ell$, and define $\widetilde V$ on $[0,NT]$ by
\[
\widetilde V(x)
=
\begin{cases}
W_j(x) & x \in [s_{j-1}, s_j - T'] \\
\varphi_j(x) & x \in [s_j - T', s_j], \, 1 \leq j \leq \ell - 1 \\
\varphi_\ell(x) & x \in [s_\ell - T', NT]
\end{cases}
\]
When $1 \leq j \leq \ell - 1$, $\varphi_j$ is chosen to be a continuous  function on $[s_j - T',s_j]$ with
\[
\varphi_j(s_j-T')
=
W_j(T'),
\quad
\varphi_j(s_j)
=
W_{j+1}(0),
\quad
\sup_{x \in [s_j - T',s_j]} |\varphi_j(x) - V(x)| < \varepsilon.
\]
Similarly, $\varphi_\ell$ is continuous with $\varphi_\ell(s_\ell-T') = W_\ell(T')$, $\varphi_\ell(NT) = W_1(0)$, and $\| \varphi_\ell - V\|_\infty < \varepsilon$.

Now, suppose $E \in [-R,R]$ and $\lambda \in I$ are such that $\widetilde D_\lambda(E) \in (-2,2)$, where $\widetilde D_\lambda$ denotes the discriminant of $H_{\lambda \widetilde V}$. By \eqref{eq:pos:avgLE}, there is $j \in \{1,\ldots,\ell\}$ such that $L(E,\lambda W_j) \geq \eta$. But then the associated transfer matrices over subintervals of $[s_{j-1}, s_j - T']$ of length $\widetilde N T$ are exponentially large. More specifically, we have
\begin{equation} \label{eq:simplele}
\begin{split}
\|  A_E^{\lambda \widetilde V}(s_j+t-2T' , s_{j-1}+t) \|
& \ge
\rho( A_E^{\lambda W_j}(t+T',t)^{\widetilde N} ) \\
& =
\rho( A_E^{\lambda W_j}(t+T',t))^{\widetilde N} \\
& =
e^{\widetilde N  T' L(E,\lambda W_j) } \\
& \geq
e^{\widetilde T \eta/(2\ell)}.
\end{split}
\end{equation}
for all $t \in [0,T']$; we have used \eqref{per:le}. Notice that the last step requires $N$ sufficiently large to get $\widetilde N \geq \frac{1}{2} (\widetilde N + 3)$. We can see that the estimate above implies lower bounds on the norms of the matrices which conjugate the monodromy matrices into rotations. More specifically, with $\Phi = \Phi_E^{\lambda \widetilde V}(s_{j-1} + t;\widetilde T) $, we have $X \Phi X^{-1} \in \SO(2,\R)$ for
\begin{align*}
X
& =
M_E^{\lambda \widetilde V}(s_{j-1}+t;\widetilde T), \\
X
& =
M_E^{\lambda \widetilde V}(s_j+t-2T'; \widetilde T) A_E^{\lambda \widetilde V}(s_j + t - 2T',s_{j-1} +t),
\end{align*}
by periodicity of $\widetilde V$ and definition of $M_E$; more specifically, $\widetilde T$-periodicity of $\widetilde V$ allows one to conclude that $A_E^{\lambda \widetilde V}(s_j + t - 2T', s_{j-1} + t)$ conjugates $\Phi$ to $\Phi_E^{\lambda \widetilde V}(s_j + t -2T';\widetilde T)$, since
\[
A_E^{\lambda \widetilde V}(s_j + t - 2T', s_{j-1} + t)
=
A_E^{\lambda \widetilde V}(s_j + t - 2T'+\widetilde T, s_{j-1} + t+\widetilde T),
\] 
and $\Phi_E^{\lambda \widetilde V}(s_j + t -2T';\widetilde T)$ is then conjugated to a rotation by $M_E^{\lambda \widetilde V}(s_j + t - 2T'; \widetilde T)$. Since conjugacies of elliptic matrices to rotations are unique modulo left-multiplication by elements of $\mathrm{SO}(2,\R)$, we have
\begin{equation} \label{eq:conjrel1}
M_E^{\lambda \widetilde V}(s_j+t-2T'; \widetilde T) A_E^{\lambda \widetilde V}(s_j + t - 2T',s_{j-1} +t)
=
O M_E^{\lambda \widetilde V}(s_{j-1}+t; \widetilde T)
\end{equation}
for some rotation $O = O(E,\lambda, t) \in \SO(2,\R)$. Using the lower bound on the norm of $A_E^{\lambda \widetilde V}(s_j+t-2T',s_{j-1}+t)$, \eqref{eq:conjrel1} implies
\begin{equation} \label{eq:conjbounds}
\max(\| M_{E}^{\lambda \widetilde V}(s_j + t -2 T'; \widetilde T)\|, \| M_{E}^{\lambda \widetilde V}(s_{j-1}+t; \widetilde T) \|)
\geq
e^{\widetilde T\eta/(4\ell)}
\end{equation}
for all $t \in [0,T']$. Notice that this uses $\|M^{-1}\| = \|M\|$ for $M \in \SL(2,\R)$ (which follows from the singular value decomposition). A bit more precisely, if one has $A = M_1^{-1} O M_2$ with $M_j \in \SL(2,\R)$ and $O \in \SO(2)$, then one cannot simultaneously have $\|M_1\|, \|M_2\| < \|A\|^{1/2}$. 

These estimates are uniform in $\lambda \in I$ and $E \in [-R,R] \cap \sigma(H_{\lambda \widetilde V})$. Consequently, for any band $J \subseteq \sigma(H_{\lambda \widetilde V})$, we have
\begin{equation}\label{e.bandest}
\Leb(J \cap [-R,R])
\leq
C e^{-\widetilde T \eta /(4\ell)}
\end{equation}
by Lemma~\ref{l:ids:tm}, where $C$ denotes a constant which depends only on $R$ and $Q := \Lambda(\|V\|_\infty + \varepsilon)$. We have also used that $dk(J) = 1/\widetilde T$ for any band $J$ of $\sigma(H_{\lambda \widetilde V})$, where $dk$ denotes the associated IDS. 
Since all potentials in question are uniformly bounded (by $\Lambda(\|V_0\|_\infty + \varepsilon)$) and $R \geq 1$, Lemma~\ref{l:bandcount} implies that the number of bands of $\sigma(H_{\lambda \widetilde V})$ which touch $[-R,R]$ is bounded above by
\[
\frac{1}{\pi}\widetilde T\sqrt{R + \Lambda(\|V\|_\infty+\varepsilon)}
=
\frac{1}{\pi} \widetilde T\sqrt{R+Q}.
\]
Thus, by \eqref{e.bandest},
\begin{equation} \label{eq:spec:expsmall}
\Leb(\sigma(H_{\lambda \widetilde V}) \cap [-R,R])
\leq
C\widetilde T e^{-\eta \widetilde T /(4\ell)}.
\end{equation}
Since $\widetilde T = NT$ and $C$ only depends on $R$ and $Q$, we may choose $\widetilde N$ sufficiently large (hence $N$ sufficiently large) and make the right hand side of \eqref{eq:spec:expsmall} smaller than $e^{-\widetilde T^{1/2}}$. Since $\|V - \widetilde V\|_\infty < \varepsilon$, the lemma is proved.
\end{proof}

\begin{remark}
Let us comment briefly on the relationship between the  proof of Lemma~\ref{l:smallspec} and the arguments in \cite{avila2009CMP}. The primary difference between our arguments and those of \cite{avila2009CMP} is that we do not attempt to push positive Lyapunov exponents through to the limit. We use growth of transfer matrices purely as a means to control the size of the spectrum (via Lemma~\ref{l:ids:tm}). Consequently, for our purposes, it suffices to consider ``local'' growth behavior of the transfer matrices of $\widetilde V$; more specifically, we only need to produce growth within subblocks of commuting matrices, where one has a simple relationship between the spectral radius and the norm (cf.\ \eqref{eq:simplele}).

If one wishes to obtain a global understanding of transfer matrix growth and control the Lyapunov exponent of $\widetilde V$, then one must deal with concatenated blocks of noncommuting matrices, and the simple Lyapunov behavior exploited in \eqref{eq:simplele} may break down. In this situation, one must produce analogs of \cite[Claim~3.6]{avila2009CMP} and \cite[Claim~3.7]{avila2009CMP} to produce the necessary global growth of transfer matrices.
\end{remark}

\section{Singular Continuous Spectrum of Zero Lebesgue Measure} \label{sec:zm}

\begin{proof}[Proof of Theorem~\ref{t:urdense:coup}]
Let $R$, $\delta$, and $\Lambda$ be given. We first show that $U_{R,\delta,\Lambda}$ is dense in $\LP$. To that end, let $V\in \LP$ be $T$-periodic, and let $\varepsilon > 0$. Choose $N$ large enough that  $e^{-\sqrt{NT}} < \delta$ and Lemma~\ref{l:smallspec} applies, and then let $\widetilde V_N$ be the potential given by the conclusion of the lemma with the same choices of $\varepsilon$, $\Lambda$, and $R$. Evidently, $\widetilde V_N \in U_{R,\delta,\Lambda} \cap B_\varepsilon(V)$, so we are done (since periodic potentials are dense in $\LP$).
\newline

It remains to be seen that $U_{R,\delta,\Lambda}$ is open in $\LP$. Suppose $V \in U_{R,\delta,\Lambda}$. By compactness of $I := [\Lambda^{-1},\Lambda]$, it suffices to show that, for every $\lambda \in I$, there exist $\tau = \tau(\lambda) > 0$ and $r = r(\lambda) > 0$ such that
\[
\Leb(\sigma(H_{\lambda' V'}) \cap [-R,R])
<
\delta
\]
whenever $\lambda' \in I$ and $V' \in \LP$ satisfy $|\lambda - \lambda'| < \tau$ and $\| V - V' \|_\infty < r$. To see why such $\tau$ and $r$ exist, fix $\lambda \in I$, and choose a cover of $\sigma(H_{\lambda V}) \cap [-R,R]$ by open intervals $I_1,\ldots, I_n$ such that $\sum_{j=1}^n |I_j| < \delta$ (which we may do by compactness of $\sigma(H_{\lambda V}) \cap [-R,R]$). Choose $\varepsilon > 0$ small enough that
\[
B_\varepsilon(\sigma(H_{\lambda V}) \cap [-R,R])
\subseteq
\bigcup_{j=1}^n I_j,
\quad
\text{and}
\quad
\sum_{j=1}^n|I_j| + 2\varepsilon < \delta,
\]
where $B_\varepsilon(S)$ denotes the $\varepsilon$-neighborhood of the set $S$. Now, take
\[
\tau
=
\tau(\lambda)
=
\frac{\varepsilon}{2\|V\|_\infty},
\quad
r
=
r(\lambda)
=
\frac{\varepsilon}{2\Lambda},
\]
and suppose $|\lambda - \lambda'| < \tau$ and $\|V - V'\|_\infty < r$; since $\|\lambda V - \lambda ' V' \|_\infty < \varepsilon$, we have
\[
\sigma(H_{\lambda' V'})
\subseteq
B_\varepsilon(\sigma(H_{\lambda V}))
\]
by the usual $L^\infty$ eigenvalue perturbation theory. Consequently,
\[
\Leb(\sigma(H_{\lambda' V'}) \cap [-R,R])
\leq
\sum_{j=1}^n |I_j|
+
2\varepsilon
<
\delta.
\]
Note that the second term originates because new spectrum might ``creep in'' at the edges of the interval $[-R,R]$. Thus, we have proved that $\tau(\lambda)$ and $r(\lambda)$ satisfy the desired properties.
\end{proof}

We can use the foregoing theorem and Baire category to produce generic singular continuous spectrum supported on a set of zero Lebesuge measure. One still needs to exclude eigenvalues on a generic set, but this is easy to do using Gordon methods; we provide the details for the reader's convenience.

\begin{proof}[Proof of Theorem~\ref{t:zmdense:coup}]
By Theorem~\ref{t:urdense:coup}, $U_{R,\delta,\Lambda}$ is a dense open subset of $\LP$ for all $R,\Lambda,\delta$. Now, take a trio of sequences $R_n, \Lambda_n \to \infty$ and $\delta_n \to 0$; by the Baire Category Theorem,
\[
\mathcal Z
=
\bigcap_{n = 1}^\infty U_{R_n,\delta_n, \Lambda_n}
\]
is a dense $G_\delta$ in $\LP$ such that $\Leb(\sigma(H_{\lambda V})) = 0$ for all $V \in \mathcal Z$ and all $\lambda > 0$.

Next, let $\mathcal G \subseteq \LP$ denote the set of \emph{Gordon potentials} in $\LP$. More specifically, $V \in \mathcal G$ if and only if $V \in \LP$ and there exist $T_k \to \infty$ such that
\[
\lim_{k \to \infty} C^{T_k} \max_{|x| \leq T_k} |V(x) - V(x+T_k)|
=
0
\]
for all $C > 0$. It is easy to see that $\mathcal G$ is dense in $\LP$ (as it contains all periodic potentials). Moreover, one can check that $\mathcal G$ is a $G_\delta$. A bit more concretely, for $n, m \in \Z_+$, denote
\[
\mathcal O_{n,m}
=
\left\{
V \in \LP : \exists T \in (n-1,n+1) \text{ with } \max_{|x| \leq T} |V(x) - V(x+T)| < m^{-T}
\right\}.
\]
It is straightforward to check that $\mathcal O_{n,m}$ is open in $\LP$ and that
\[
\mathcal G
=
\bigcap_{N\geq 1} \bigcup_{n \geq N} \bigcup_{m \geq N} \mathcal O_{n,m}.
\]

Since $\sigma_{\mathrm{pp}}(H_{\lambda V}) = \emptyset$ for every $V \in \mathcal G$ and every $\lambda > 0$ \cite{G76}, we obtain the desired result with $\mathcal C = \mathcal G \cap \mathcal Z$.
\end{proof}

\section{Zero Hausdorff Dimension} \label{sec:hd0}

Here, we will prove Theorem~\ref{t:hd0dense}. For the convenience of the reader, and to establish notation, let us briefly recall how Hausdorff measures and dimension are defined; for further details, \cite{Falconer} supplies an inspired reference.

Given a set $S \subseteq \R$ and $\delta > 0$, a $\delta$-\emph{cover} of $S$ is a countable collection of intervals $\{I_j\}$ such that $S \subseteq \bigcup_j I_j$ and $\Leb(I_j) < \delta$ for each $j$. Then, for each $\alpha \geq 0$, one defines the $\alpha$-dimensional \emph{Hausdorff measure} of $S$ by
\[
h^\alpha(S)
=
\lim_{\delta \downarrow 0} \inf_{\delta\text{-covers}} \sum_j \Leb(I_j)^\alpha.
\]
For each $S \subseteq \R$, there is a unique $\alpha_0 \in [0,1]$ such that
\[
h^\alpha(S)
=
\begin{cases}
\infty & 0 \leq \alpha < \alpha_0 \\
0      & \alpha_0 < \alpha
\end{cases}
\]
We denote $\alpha_0 = \dim_{\Hd}(S)$ and refer to this as the \emph{Hausdorff dimension} of the set $S$.

\begin{proof}[Proof of Theorem~\ref{t:hd0dense}]

Let $V_0 \in \LP$ be $T_0$-periodic, and suppose $\varepsilon_0 > 0$.  We will construct a sequence $(V_n)_{n=1}^\infty$ consisting of periodic potentials so that $V_\infty = \lim_n V_n$ satisfies $\|V_0 - V_\infty\|_\infty  < \varepsilon_0$ and
\[
h^\alpha(\sigma(H_{\lambda V_\infty}))
=
0
\]
for all $\lambda > 0$ and all $\alpha>0$; evidently, this suffices to show that $\sigma(H_{\lambda V_\infty})$ has Hausdorff dimension zero for all $\lambda > 0$. Throughout the proof, $H_{n,\lambda} = -\Delta + \lambda V_n$ and $\Sigma_{n,\lambda} = \sigma(H_{n,\lambda})$ for $1 \leq n \leq \infty$, $\lambda > 0$.
\newline

Denote $\Lambda_n = r_n = 2^n$ (in general, one may take any pair of sequences tending to $\infty$ not too quickly). Take $\varepsilon_1 = \varepsilon_0/2$. By Lemma~\ref{l:smallspec}, we may construct a $T_1$-periodic $V_1$ with $T_1 > 1$, $\|V_0 - V_1 \|_\infty < \varepsilon_1$, and for which
\[
\delta_1
:=
\max_{\Lambda_1^{-1} \leq \lambda \leq \Lambda_1} \Leb(\Sigma_{1,\lambda} \cap [-r_1,r_1])
<
e^{-T_1^{1/2}}.
\]
Having constructed $V_{n-1}$, $\delta_{n-1}$, and $\varepsilon_{n-1}$, let
\begin{equation}\label{eq:nextepsdef}
\varepsilon_{n}
=
\min\left(\frac{\varepsilon_{n-1}}{2},
\frac{1}{2} n^{-T_{n-1}},
\frac{\delta_{n-1}}{4\Lambda_{n-1}} \right).
\end{equation}
By Lemma~\ref{l:smallspec}, we may construct a $T_{n} := N_n T_{n-1}$-periodic potential $V_{n}$ with $\|V_n - V_{n-1}\| < \varepsilon_{n}$  such that
\begin{equation} \label{eq:smallspec}
\delta_{n}
:=
\max_{\Lambda_n^{-1} \leq \lambda \leq \Lambda_n} \Leb(\Sigma_{n,\lambda} \cap [-r_n,r_n])
<
e^{-T_{n}^{1/2}}.
\end{equation}
Clearly, $V_\infty = \lim_{n\to\infty}V_n$ exists and is limit-periodic. From the first condition in \eqref{eq:nextepsdef}, we deduce
\[
\| V_0 - V_\infty\|
<
\sum_{j = 1}^\infty \varepsilon_j
\leq
\sum_{j=1}^\infty 2^{-j} \varepsilon_0
=
 \varepsilon_0,
\]
so $V_\infty \in B_{\varepsilon_0}(V_0)$. Similarly, using the first and second conditions in \eqref{eq:nextepsdef}, we observe that
\[
\|V_n - V_\infty\|_\infty
<
n^{-T_n}
\]
for each $n \geq 2$. From this, it is easy to see that $\lambda V_\infty$ is a Gordon potential for every $\lambda > 0$. In particular, $H_{\lambda V_\infty}$ has purely continuous spectrum for all $\lambda > 0$. Thus, it remains to show that the spectrum has Hausdorff dimension zero. The key observation in this direction is that \eqref{eq:nextepsdef} yields
\begin{equation} \label{eq:taildist}
\|\lambda V_n - \lambda V_\infty\|
\leq
\Lambda_n \sum_{j = n + 1}^\infty \varepsilon_j
<
\sum_{k = 2}^\infty 2^{-k} \delta_n
=
\delta_n/2
\end{equation}
for all $n \in \Z_+$ and all $\Lambda_n^{-1} \leq \lambda \leq \Lambda_n$.

\begin{claim} \label{cl:hdim}
For all $j \in \Z_+$, and all $\lambda > 0$, $\dim_{\mathrm H}(\Sigma_{\infty,\lambda} \cap [-r_j,r_j]) = 0$.
\end{claim}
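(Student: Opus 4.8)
The plan is to cover $\Sigma_{\infty,\lambda}\cap[-r_j,r_j]$ by slightly fattened bands of a periodic approximant $\Sigma_{n,\lambda}$, and to play two facts against one another: by Lemma~\ref{l:bandcount} only \emph{linearly many} (in $T_n$) bands of $\Sigma_{n,\lambda}$ meet any fixed energy window, whereas by \eqref{eq:smallspec} their combined length is \emph{exponentially small} in $T_n^{1/2}$. Fix $j\in\Z_+$, $\lambda>0$, and $\alpha>0$. Since $\Lambda_n=r_n=2^n\to\infty$, for all sufficiently large $n$ we have $\lambda\in[\Lambda_n^{-1},\Lambda_n]$, $r_n>r_j+1$, and $\delta_n<1$. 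The key quantitative input is \eqref{eq:taildist}, which gives $\|\lambda V_n-\lambda V_\infty\|_\infty\le\delta_n/2$; hence, by the usual $L^\infty$ eigenvalue perturbation theory (as already used in the proof of Theorem~\ref{t:urdense:coup}), $\Sigma_{\infty,\lambda}\subseteq B_{\delta_n/2}(\Sigma_{n,\lambda})$.

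I would then localize. If $E\in\Sigma_{\infty,\lambda}\cap[-r_j,r_j]$, pick $E'\in\Sigma_{n,\lambda}$ with $|E-E'|\le\delta_n/2<1$, so that $E'\in\Sigma_{n,\lambda}\cap[-(r_j+1),r_j+1]$. The crucial observation is that this window is \emph{fixed}, independent of $n$. Since $\|\lambda V_n\|_\infty\le\lambda(\|V_0\|_\infty+\varepsilon_0)$ uniformly in $n$, Lemma~\ref{l:bandcount} shows that $\Sigma_{n,\lambda}\cap[-(r_j+1),r_j+1]$ is a union of at most $c_j T_n$ closed intervals, where the constant $c_j$ depends on $j,\lambda,\|V_0\|_\infty,\varepsilon_0$ but \emph{not} on $n$. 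Because this set is also contained in $\Sigma_{n,\lambda}\cap[-r_n,r_n]$ for large $n$, its total length — hence the length of each of those intervals — is at most $\delta_n$ by \eqref{eq:smallspec}. Fattening each of these intervals by $\delta_n/2$ on each side therefore produces a cover of $\Sigma_{\infty,\lambda}\cap[-r_j,r_j]$ by at most $c_j T_n$ intervals, each of length at most $2\delta_n$.

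Summing over this cover and using $\delta_n<e^{-T_n^{1/2}}$, one gets $\sum_i\Leb(I_i)^\alpha\le c_j T_n(2\delta_n)^\alpha\le 2^\alpha c_j\,T_n e^{-\alpha T_n^{1/2}}$. Given $\delta>0$, choosing $n$ large enough that $2\delta_n<\delta$ makes this a valid $\delta$-cover, and the right-hand side tends to $0$ as $n\to\infty$ since $T_n\to\infty$ and $t\mapsto t e^{-\alpha t^{1/2}}$ vanishes at infinity. Hence $h^\alpha(\Sigma_{\infty,\lambda}\cap[-r_j,r_j])=0$, and since $\alpha>0$ was arbitrary, $\dim_{\Hd}(\Sigma_{\infty,\lambda}\cap[-r_j,r_j])=0$, which is the claim.

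I do not anticipate a serious obstacle: the argument is essentially bookkeeping with Lemma~\ref{l:bandcount}, the $L^\infty$ perturbation bound, and the quantitative estimates \eqref{eq:taildist} and \eqref{eq:smallspec}. The one thing to get right is precisely the balance described above — one should resist using the moving window $[-r_n,r_n]$, which would inject an extra factor $\sqrt{r_n}=2^{n/2}$ into the band count and then require controlling how fast $T_n\to\infty$; working in the fixed window $[-(r_j+1),r_j+1]$ makes the polynomial-in-$T_n$ band count be crushed by $e^{-\alpha T_n^{1/2}}$ with no assumption on the growth rate of $T_n$ beyond $T_n\to\infty$, which holds by construction.
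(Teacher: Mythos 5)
Your proof is correct and follows the same basic strategy as the paper's: cover $\Sigma_{\infty,\lambda}\cap[-r_j,r_j]$ by $\delta_n/2$-fattened bands of the periodic approximant $\Sigma_{n,\lambda}$, bound the number of intervals via Lemma~\ref{l:bandcount}, and crush the polynomial band count with the exponential smallness from \eqref{eq:smallspec}. The one genuine difference is the window used for the band count. The paper applies Lemma~\ref{l:bandcount} on the moving window $[-r_n,r_n]$ (adding two edge intervals $[\pm(r_n-2\delta_n),\pm r_n]$ to handle spectrum creeping past the endpoints), which yields the factor $\sqrt{\Lambda_n(\|V_0\|_\infty+\varepsilon)+r_n}\sim 2^{n/2}$ and therefore implicitly requires $T_n$ to outpace $2^n$ — this is exactly what the parenthetical ``any pair of sequences tending to $\infty$ not too quickly'' is guarding against, and it is available because $N_n$ may always be taken larger in Lemma~\ref{l:smallspec}. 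Your choice of the fixed window $[-(r_j+1),r_j+1]$ makes the band-count constant independent of $n$, so the conclusion needs nothing beyond $T_n\to\infty$; this is a small but real simplification of the bookkeeping, and your closing remark identifies precisely the trade-off involved.
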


\begin{proof}[Proof of Claim]
Let $j \in \Z_+$, $\delta > 0$, $\lambda > 0$, and $\alpha > 0$ be given. Choose $n \geq j$ for which $2\delta_n < \delta$ and $\lambda \in [\Lambda_n^{-1},\Lambda_n]$. Then, by \eqref{eq:taildist}, the $\delta_n/2$-neighborhood of $\Sigma_{n,\lambda} \cap [-r_n,r_n]$ together with the intervals $[-r_n,-r_n + 2\delta_n]$ and $[r_n - 2\delta_n,r_n]$ comprises a $\delta$-cover of $\Sigma_{\infty,\lambda} \cap [-r_j,r_j]$; denote this cover by $\mathcal I_n$. By \eqref{eq:smallspec} and Lemma~\ref{l:bandcount}, we have
\[
\sum_{I \in \mathcal I_n}
|I|^{\alpha}
\leq
\left(\frac{1}{\pi} T_n \sqrt{\Lambda_n(\|V_0\|_\infty + \varepsilon) + r_n} + 3 \right) 2^\alpha e^{-\alpha T_n^{1/2}}
\]
Sending $\delta \downarrow 0$ (and hence $n \to \infty$), we have $h^{\alpha}(\Sigma_{\infty,\lambda} \cap [-r_j,r_j]) = 0$, which proves the claim.
\end{proof}

With Claim~\ref{cl:hdim} in hand, we have $h^\alpha(\Sigma_{\infty,\lambda}) = 0$ immediately. Since this holds for all $\alpha > 0$ and all $\lambda > 0$, the theorem follows.
\end{proof}

\end{document}